\definecolor{orange}{rgb}{1,0.5,0}
\newtheorem{theorem}{Theorem}
\newtheorem{definition}{Definition}
\newtheorem{remark}{Remark}
\newtheorem{lemma}{Lemma}
\newtheorem{proposition}{Proposition}
\newtheorem{example}{Example}
\newtheorem{corollary}{Corollary}
\renewcommand{\epsilon}{\varepsilon}
\renewcommand{\phi}{\varphi}
\renewcommand{\ge}{\geq}
\newcommand{\x}{\times}
\DeclareMathOperator{\Id}{Id}
\DeclareMathOperator{\e}{e}
\DeclareMathOperator{\dist}{dist}
\newcommand{\w}{\omega}
\newcommand{\W}{\Omega}
   \newcommand{\R}{\ensuremath{\mathds R}}
\definecolor{orange}{rgb}{1,0.5,0}
\begin{document}
\title[]
   {Random perturbations of an\\ eco-epidemiological model}
\author{Lopo F. de Jesus}
\address{L. F. de Jesus
   Departamento de Matem\'atica\\
   Universidade da Beira Interior\\
   6201-001 Covilh\~a\\
   Portugal}
   \email{lopo.jesus@ubi.pt}
\author{C\'esar M. Silva}
\address{C. M. Silva\\
   Departamento de Matem\'atica\\
   Universidade da Beira Interior\\
   6201-001 Covilh\~a\\
   Portugal}
\email{csilva@ubi.pt}
\author{Helder Vilarinho}
\address{Helder Vilarinho\\
   Departamento de Matem\'atica\\
   Universidade da Beira Interior\\
   6201-001 Covilh\~a\\
   Portugal}
\email{helder@ubi.pt}
\date{\today}
\thanks{L. F. de Jesus, C. M. Silva and H. Vilarinho were partially supported by FCT through CMA-UBI (project UIDB/MAT/00212/2020).}
\keywords{Eco-epidemiological model, random attractor, random dynamical systems, random differential equations}

\subjclass{34F05, 60H25, 37A50}
\begin{abstract}
We consider random perturbations of a general eco-epidemiological model. We prove the existence of a global random attractor, the persistence of susceptibles preys and provide conditions for the simultaneous extinction of infectives and predators.
We also discuss the dynamics of the corresponding random   epidemiological $SI$ and predator-prey models. We obtain for this cases a global random attractor, prove the prevalence of susceptibles/preys and provide conditions for the extinctions of infectives/predators.
\end{abstract}
\maketitle
\section{Introduction}
The understanding of asymptotic behavior of eco-epidemiological models is an important problem in the mathematical biology. However, from the very beginning of the theory it became clear that even  models for a ultra simplified version of real world phenomena exhibit intricate and complex behaviours, despite their simple formulation.
In view of this, several mathematical tools were developed with great success in order to understand as much as possible the features and properties of these models.

Motivated, in one hand, by the attempts to approximate the mathematical models to real world phenomena as much as possible and, on the other hand, by the mathematical challenge to provide a deep and general knowledge on the theory constructed due to this formulations, the models and the related theory have been reconstructed and evolved in many directions. One of these directions aimed to consider nonautonomous elements in the mathematical models, such as the seasonal dynamics, and also random elements in order to deal with the presence of noise or complicated fluctuations, in contrast to a completely deterministic situation.

In the nondeterministic situation thee are two main approaches to incorporate randomness by considering stochastic and random perturbations, which, roughly speaking, can be expressed throughout stochastic and random differential equations. For an exposition on this subjects, their comparison and application in this biological context we refer to~\cite
{Arnold, Caraballo-Colicci-Han-2016, caraballo-colucci-lopez-de-la-cruz-Rapaport-2019, Caraballo-Han-2016, Caraballo-Colicci-2017, Han-Kloeden-2017} and references therein. There are techniques to transform a system with stochastic perturbation to a random dynamical system, being that this can lead to unbounded random coefficients on the system and can substantially change the structure of the model; see for instance~\cite{Caraballo-Han-2016,Caraballo-Colicci-2017}.

Random attractors are a central concept in the analysis of random models. Since their introduction there are several improvements regarding the existence and properties of such attractors, but there are questions that  are still open in this theory; see~\cite{crauel-1999, crauel-2001, Crauel-Flandoli-1994, Crauel-Kloeden-2015, Crauel-Scheutzow-2018, Han-Kloeden-2017}. The main strategy adopted to ensure the existence of a random attractor for a given family of random sets is to find a compact absorbing set. Moreover, since the family of random sets we are interested contains every compact deterministic set, the random attractor is actually unique (cf. Remark~\ref{unique}).

In this work we consider random perturbations of a general eco-epidemiological model introduced in~\cite{jesus-silva-vilarinho-pre}, that generalizes the model in~\cite{Niu-Zhang-Teng-AMM-2011} by adding a general function corresponding to predation on uninfected and infected preys. We introduce a random coefficient, establish a framework of Random Dynamical Systems (RDS) and discuss the asymptotic behaviour of the solutions of the model considered. Namely, we focus on the existence of a global random attractor which can be understood as a random counterpart of a deterministic global attractor.  We moreover prove the prevalence of susceptibles preys and provide conditions for the simultaneous extinction of infectives and predators.

We also discuss the dynamics of the corresponding random   epidemiological $SI$ and predator-prey models, by considering the infectives ($I$) and predators ($P$) identically equal to zero in the main model~\eqref{Eq. Principal random}. For both subsystems we obtain a global random attractor, prove the persistence of susceptibles/preys and provide conditions for the extinctions of infectives/predators. Random perturbations for a $SI$ model, albeit slightly different, were discussed in~\cite{Caraballo-Han-2016,Caraballo-Colicci-2017}.

This work is organized as follows: in \S\ref{section:PSPS} we recall basic facts from RDS and random attractors; in \S\ref{section:model} we introduce a random perturbation in an eco-epidemiological model and establish a RDS framework; in \S\ref{section:random_attractors} we prove the existence of a unique global random attractor and provide a threshold for the extinction of predators and infected preys; in \S\ref{section:partial} we discuss the partial dynamics of the perturbed model in the absence of predators or infected preys.

\section{Random attractors}\label{section:PSPS}
We start by recalling some basic concepts about Random Dynamical Systems (RDS) and random attractors. For details on RDS we refer to the reference monograph by Arnold~\cite{Arnold} and for random attractors see e.g. the survey \cite{Crauel-Kloeden-2015} and reference therein.

Let $(\Omega,\mathcal{F},\mathbb{P})$ be a probability space, where $\mathcal{F}$ is the $\sigma$-algebra of measurable subsets of $\Omega$ and $\mathbb{P}$ is a probability measure on $\mathcal{F}$. Given a topological space $S$ we denote by $\mathcal B(S)$ the Borel $\sigma$-algebra of $S$. Consider a \emph{metric dynamical system} $(\Omega, \mathcal{F}, \mathbb{P}, \theta)$ in the sense that
\begin{enumerate}[label=(\roman*)]
   \item $\theta \colon \mathbb{R} \times \Omega \to \Omega$ is $({\mathcal{B} (\mathbb{R}) \otimes \mathcal{F} ,\mathcal{F}})$-measurable;
   \item $\theta_t \colon \Omega \to \Omega$ given by $\theta_t\w=\theta(t,\w)$ satisfies:
   \begin{enumerate}
   \item  $\theta_0 = \Id_{\Omega}$ and $\theta_{t+s}=\theta_t\circ\theta_s$, for
       all $t,s \in\mathbb{R}$;
   \item $\mathbb{P}(\theta_{t} A) = \mathbb{P}(A)$, for all $A \in \mathcal{F}$, that is, $\theta_t$ preserves the probability measure $\mathbb{P}$ for all $t \in \mathbb{R}$.
   \end{enumerate}
\end{enumerate}
The non-intuitive term \emph{metric} is present in the literature for historical reasons.

 A (\emph{measurable}) \emph{random dynamical system} (RDS) $ \phi $ on $X=\R^d$ over
$ \theta $ (with time $\mathbb{R}_0^+$) is a map
\begin{equation*}
  \phi:\mathbb{R}_0^+\times\Omega\times X \to X
\end{equation*}
satisfying
\begin{enumerate}[label=(\roman*)]
   \item measurability: $(t,\omega,x) \mapsto \phi(t,\omega,x)$ is $({\mathcal{B}(\mathbb{R}_0^+) \otimes \mathcal{F}\otimes\mathcal{B}(X),\mathcal{B}(X)})$-measurable;
   \item cocycle property: $\phi(t,\omega,x)$ forms a  cocycle over $\theta$, i.e.,
\begin{enumerate}
    \item $\phi(0,\omega,x)=x$, for all $\omega\in\Omega, x \in X$;
    \item $\phi(t+s,\omega,x)=\phi(t,\theta_s\omega, \phi(s,\omega,x))$, for all $s, t\in\mathbb{R}_0^+$, $\omega\in\Omega$ and $x \in X$.
\end{enumerate}
\end{enumerate}
We, moreover, assume a continuity condition:
\begin{enumerate}[label=(\roman*)]
  \setcounter{enumi}{2}
  \item $ x \to \varphi(t,\omega,x) $ is continuous for all $\omega \in \Omega$ and $t \in \mathbb{R}_0^+$.
\end{enumerate}
To simplify we refer to such a RDS as the pair $(\theta,\varphi)$.

\begin{remark}
  \begin{enumerate}
  \item[]
    \item The cocycle property is assumed to hold for all $\w\in\W$, or at least in a $\mathbb P$-full measure subset. This can be a delicate issue to ensure in specific examples of RDS generated from a stochastic or a random differential equation.
    \item Often joint continuity $(t,x) \to \varphi(t,\omega,x)$ is assumed, but the continuity in time $t$ will not have a role in the following theory of random attractors. Nevertheless, the RDS to be considered here is induced by random differential equations, which provide joint continuity in time and space. In this case, the mapping $\varphi(t,\w,x_0)$ corresponds to the solution mapping with noise realization $\w$ and initial condition $x_0=\varphi(0,\w,x_0)$; cf. Theorem~\ref{1A}.
    \item We consider the phase space $X=\R^d$ because we have in mind specific RDS related to random versions of eco-epidemiological models that evolve on $\R^3$. However, it is typical to assume $X$ to be a Polish space, i.e., a separable topological space for which there is a complete metric which induces the topology. This, in particular, includes open non-empty subsets of Euclidean spaces as well as separable Hilbert and Banach spaces.
  \end{enumerate}
\end{remark}

\begin{definition}
A \emph{random set}~$C$ is a measurable subset of $X\times\Omega$ with respect to the product $\sigma$-algebra $\mathcal B(X)\otimes\mathcal F$.
\end{definition}

 Given $\omega\in\Omega$, the $\omega$-section of a random set~$C\subseteq X\times \Omega$ is defined by
 \begin{equation*}
   C(\omega)=\{x:(x,\omega)\in C\}.
 \end{equation*}
If a set~$C\subseteq X\times\Omega$ has closed or
compact $\omega$-sections $C(\w)$ it is a random set as soon as the mapping
$\omega\mapsto d\bigl(x,C(\omega)\bigr)$ is measurable (from $\W$ to $[0,\infty[$) for every $x\in X$ (see~\cite{Castaing-Valadier-1977}).
In this case~$C$ will be said to be a \emph{closed} or a \emph{compact}
 random set, respectively. We say that a random set $C$ has deterministic components or, for short, that is a deterministic set (as subset of $X\x\W$) if its $\w$-sections are constant: there is $\hat C\subseteq X$ such that $C(\w)=\hat C$ for all (or, at least, almost all) $\w\in\W$. We define $\mathcal D(X)$ as the set of all deterministic compact random sets.  For any set~$C\subseteq X\times\Omega$, we define
$\overline{C}:=\{(x,\omega):\,x\in \overline{C(\omega)}\}$. We say that a random set $C(\w)$ is \emph{bounded} if $C(\w)\subseteq X$ is bounded for all (or, at least, almost all) $\w\in\W$.

\begin{remark}
In general, having
  $\omega\mapsto d\bigl(x,C(\omega)\bigr)$  measurable
  for every $x\in X$, does not guarantee that $C\subseteq X\x \W$ is a $(\mathcal B(X)\otimes\mathcal F)$- measurable set;
  see~\cite[Remark~4]{Crauel-Kloeden-2015}.
\end{remark}

\begin{definition}
A bounded random set $K$ is said to be \emph{tempered} with respect to $\theta$ if for $\mathbb{P}$-a.e. $\omega \in \Omega$,
$$\lim_{t \to \infty}e^{-\beta t} \sup\limits_{x \in K(\theta_{-t}\w)}\|x\|=0,  \,\text{ for all } \beta >0.$$
A random variable $r\colon\W\to\R$ is said to be \emph{tempered} with respect to $\theta$ if for $\mathbb{P}$-a.e. $\omega \in \Omega$,
$$\lim_{t \to \infty}e^{-\beta t} \sup\limits_{t \in \R} |r(\theta_t\w)\\|=0,  \,\text{ for all } \beta >0.$$
\end{definition}
We denote by $\mathcal{T}(X)$ the set of all tempered sets of $X$ (i.e, tempered bounded random sets with fibers on $X$) with respect to $\theta$. Notice that $\mathcal{D}(X)\subseteq \mathcal{T}(X)$. In our perspective, the underlying dynamics $\theta$ is given, so that we will often omit the reference to $\theta$.

\begin{definition}
Consider a RDS $(\theta,\varphi)$ on $X$ and an arbitrary family $\mathcal R$ of random sets. A random set $\Gamma$ is called a \emph{random absorbing set} in $\mathcal{R}$ if for any $K \in \mathcal{R}$ and $\mathbb{P}$-a.e. $\omega \in \Omega$, there exists $T_K(\omega)>0$ such that
$$\varphi(t,\theta_{-t}\omega,K(\theta_{-t}\omega)) \subseteq \Gamma(\omega)\,\text{ for all } t \ge T_K(\omega).$$
If, in addition, $\Gamma$ is a closed random set, then we say that $\Gamma$ is a \emph{closed absorbing set}.
\end{definition}

\begin{definition}
Consider a RDS $(\theta,\varphi)$ on $X$ and an arbitrary family $\mathcal R$ of random sets. A compact random set $\mathcal{A}$ is called a \emph{pullback $\mathcal{R}$ attractor} if:
\begin{enumerate}[label=(\roman*)]
\item invariance: for $\mathbb{P}-$a.e. $\omega \in \Omega$ and all $t \geq 0$ it holds
$$\varphi(t,\omega,\mathcal{A}(\omega))=\mathcal{A}(\theta_t\omega);$$
\item attracting property: for any $K \in \mathcal{R}$ and $ \mathbb{P} $-a.e. $\omega \in \Omega,$
\begin{equation}\label{eq:back attr}
\lim_{t \to \infty}\dist(\varphi(t,\theta_{-t}\omega,K(\theta_{-t}\omega)),\mathcal{A}(\omega))=0,\\
\end{equation}
where
\begin{equation}
dist(G,H)=\sup_{g \in G} \inf_{h \in H} \|g-h\|
\end{equation}
is the Hausdorff semi-metric for $G,H \subseteq X.$
\end{enumerate}
If $\mathcal{R}=\mathcal{T}(X)$  we say in this conditions that $\mathcal{A}$ is a \emph{global random attractor}.
\end{definition}

\begin{remark}\label{TD}
  Notice that a {global random attractor} is also a {pullback $\mathcal{D}(X)$ attractor}.
\end{remark}

\begin{remark}
Unless stated otherwise, a random attractor will be understood always as a pullback attractor. There are other notions of attraction in this context, such as forward attraction, weak attraction and attraction in probability but it is not our purpose to investigate those behaviours. Notice also that the notion of pullback attractor does not depend on the choice of the metric $dist$, which is not the case when we consider a forward attractor $\mathcal A$ which instead condition~\eqref{eq:back attr} satisfies
\begin{equation*}
\lim_{t \to \infty}\dist(\varphi(t,\omega,K(\omega)),\mathcal{A}(\theta_t\omega))=0;\\
\end{equation*}
see~\cite[Section 5]{Crauel-Scheutzow-2018}.
\end{remark}

\begin{proposition}\cite[Proposition 1]{Caraballo-Colicci-2017}\label{PropA}
Consider a RDS $(\theta,\varphi)$ on $X$ and an arbitrary family $\mathcal R$ of random sets containing $\mathcal{D}(X)$. If there is a compact absorbing set $\Gamma \in \mathcal{R}$ then there is a unique global random attractor $\mathcal{A}$  with component subsets
\begin{equation}
\mathcal{A}(\omega)=\bigcap_{\tau\geq T_\Gamma(\w)} \overline{\bigcup_{t\geq \tau} \varphi(t,\theta_{-t}\omega,\Gamma(\theta_{-t}\omega))}.
\end{equation}
If the pullback absorbing set is positively invariant, i.e., $\varphi(t,\omega,\Gamma(\omega)) \subset \Gamma(\theta_t\omega)$ for all $t \geq 0$, then
\begin{equation}
\mathcal{A}(\omega)=\bigcup_{t> T_\Gamma(\w)} \overline{\varphi(t,\theta_{-t}\omega,\Gamma(\theta_{-t}\omega))}.
\end{equation}
\end{proposition}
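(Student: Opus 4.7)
The plan is to define $\mathcal{A}$ by the omega-limit-type formula given in the statement and to verify directly the three defining properties of a pullback attractor (compactness/measurability, invariance, attraction), then deduce uniqueness and the simplified form under positive invariance. For non-emptiness and compactness, taking $K=\Gamma\in\mathcal{R}$ in the absorbing property, for every $t\ge T_\Gamma(\omega)$ one has $\varphi(t,\theta_{-t}\omega,\Gamma(\theta_{-t}\omega))\subseteq\Gamma(\omega)$, so $\overline{\bigcup_{t\ge\tau}\varphi(t,\theta_{-t}\omega,\Gamma(\theta_{-t}\omega))}$ is a non-empty compact subset of $\Gamma(\omega)$ for every $\tau\ge T_\Gamma(\omega)$; these closures are nested and decreasing in $\tau$, so $\mathcal{A}(\omega)$ is a non-empty compact subset of $\Gamma(\omega)$. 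Measurability of $\omega\mapsto d(x,\mathcal{A}(\omega))$ follows from the measurability of $\Gamma$, the joint measurability of $(t,\omega,x)\mapsto\varphi(t,\omega,x)$ and of $\theta$, and the joint continuity of $\varphi$, which permits reducing the uncountable union to a countable one over rational $t$.

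For attraction, fix $K\in\mathcal{R}$ and argue by contradiction: if $y_n=\varphi(t_n,\theta_{-t_n}\omega,x_n)$ with $x_n\in K(\theta_{-t_n}\omega)$, $t_n\to\infty$, and $d(y_n,\mathcal{A}(\omega))\ge\varepsilon$, write $t_n=T_K(\omega)+s_n$ with $s_n\to\infty$ and apply the cocycle property to get $y_n=\varphi(s_n,\theta_{-s_n}\omega,z_n)$ with $z_n=\varphi(T_K(\omega),\theta_{-t_n}\omega,x_n)\in\Gamma(\theta_{-s_n}\omega)$ by the absorbing property for $K$. Hence $y_n\in\varphi(s_n,\theta_{-s_n}\omega,\Gamma(\theta_{-s_n}\omega))\subseteq\Gamma(\omega)$ for $n$ large, and every limit point of $(y_n)$ lies in $\mathcal{A}(\omega)$ by the very definition of $\mathcal{A}(\omega)$, contradicting the lower bound.

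For invariance, the change of variable $u=t-s$ combined with the cocycle identity $\varphi(u+s,\theta_{-u}\omega,\cdot)=\varphi(s,\omega,\varphi(u,\theta_{-u}\omega,\cdot))$ identifies $\mathcal{A}(\theta_s\omega)$ with an expression involving $\varphi(s,\omega,\cdot)$ applied to the family defining $\mathcal{A}(\omega)$; continuity of $\varphi(s,\omega,\cdot)$ then allows commuting it with closures and the monotone intersection inside the compact set $\Gamma(\omega)$, yielding $\varphi(s,\omega,\mathcal{A}(\omega))=\mathcal{A}(\theta_s\omega)$. The main obstacle here is the inclusion $\mathcal{A}(\theta_s\omega)\subseteq\varphi(s,\omega,\mathcal{A}(\omega))$: given $y\in\mathcal{A}(\theta_s\omega)$ realized as a limit along the nested family, one must extract pre-images under $\varphi(s,\omega,\cdot)$ with a convergent subsequence inside $\Gamma(\omega)$ and verify that the limit lies in $\mathcal{A}(\omega)$, combining sequential compactness with continuity in the state variable.

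For uniqueness, any other global random attractor $\mathcal{A}'$ is compact and invariant; since $\mathcal{A}\subseteq\Gamma$ is tempered, $\mathcal{A}'$ attracts $\mathcal{A}$, and invariance forces $\mathcal{A}\subseteq\mathcal{A}'$; swapping roles gives equality. Finally, under positive invariance $\varphi(s,\omega,\Gamma(\omega))\subseteq\Gamma(\theta_s\omega)$, the cocycle identity together with the absorbing property shows that the family $t\mapsto\varphi(t,\theta_{-t}\omega,\Gamma(\theta_{-t}\omega))$ is nested in $t$ for $t\ge T_\Gamma(\omega)$, which collapses the inner union in the general formula to a single term and yields the simpler expression stated.
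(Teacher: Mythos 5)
The paper itself offers no proof of this proposition: it is quoted from \cite{Caraballo-Colicci-2017} (ultimately the Crauel--Flandoli existence theorem), so there is no in-paper argument to compare against. Your sketch follows the standard route (pullback omega-limit set of the compact absorbing set), and the non-emptiness/compactness, invariance and uniqueness parts are essentially correct at this level of detail.

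There is, however, one step that fails as written, in the attraction argument. You decompose $t_n=T_K(\omega)+s_n$ and claim $z_n=\varphi(T_K(\omega),\theta_{-t_n}\omega,x_n)\in\Gamma(\theta_{-s_n}\omega)$ ``by the absorbing property for $K$''. But the absorbing property at the fibre $\omega''=\theta_{-s_n}\omega$ only gives $\varphi(t,\theta_{-t}\omega'',K(\theta_{-t}\omega''))\subseteq\Gamma(\omega'')$ for $t\geq T_K(\omega'')=T_K(\theta_{-s_n}\omega)$; you are applying it with elapsed time $t=T_K(\omega)$, so you would need $T_K(\omega)\geq T_K(\theta_{-s_n}\omega)$ for every $n$. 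The absorption time is fibre-dependent and no such uniform bound is available. The standard repair reverses the roles of the two times: fix the \emph{outer} time $\tau_0$, chosen so that $\overline{\bigcup_{t\geq\tau_0}\varphi(t,\theta_{-t}\omega,\Gamma(\theta_{-t}\omega))}$ lies in a prescribed $\varepsilon$-neighbourhood of $\mathcal A(\omega)$ (using that a decreasing family of compacta whose intersection lies in an open set is eventually contained in it), and then for $t\geq\tau_0+T_K(\theta_{-\tau_0}\omega)$ use the cocycle property to get $\varphi(t,\theta_{-t}\omega,K(\theta_{-t}\omega))\subseteq\varphi(\tau_0,\theta_{-\tau_0}\omega,\Gamma(\theta_{-\tau_0}\omega))$; here only the single finite number $T_K(\theta_{-\tau_0}\omega)$ is needed. (Alternatively, keep your decomposition but let $s_n\to\infty$ slowly enough that $t_n-s_n\geq T_K(\theta_{-s_n}\omega)$, via a diagonal choice.) Two smaller points: your measurability argument invokes joint continuity in $t$, which the paper explicitly does not assume for a general RDS; and in the positively invariant case your (correct) observation that the family is decreasing in $t$ collapses the general formula to the \emph{intersection} $\bigcap_{t\geq T_\Gamma(\omega)}\overline{\varphi(t,\theta_{-t}\omega,\Gamma(\theta_{-t}\omega))}$, not the union displayed in the statement, which appears to be a typo inherited by the paper.
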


\begin{remark}\label{Rm1}
The original statement requires an asymptotic compactness property which is trivially satisfied in our context since $X=R^d$. Note also that this path-wise attracting in the pullback sense does not need to be path-wise attracting in the forward sense, although it is forward attracting in probability: for any $\epsilon>0$,
\begin{equation*}\begin{split}
 & \mathbb P(\{\w\in\W\colon\dist(\varphi(t,\theta_{-t}\omega,K(\theta_{-t}\omega)),\mathcal{A}(\omega))\geq\epsilon\})\\
   & = \mathbb P(\{\w\in\W\colon\dist(\varphi(t,\omega,K(\omega)),\mathcal{A}(\theta_t\omega))\geq \epsilon\})
\end{split}\end{equation*}
which goes to 0 as $t\to\infty$. That is,
\begin{equation*}  \label{tq11}
    \lim_{t\to\infty}d\bigl(\varphi(t,\omega)K(\omega),
    A(\theta_t\omega)\bigr)=0\quad \mbox{in probability}
  \end{equation*}
  for every $K\in\mathcal R$. In particular, this allows individual realizations along
sample paths to have large deviations from the attractor, but still to converge in this
probabilistic sense.
\end{remark}

\begin{remark}\label{unique}
The attractor need not be unique for a general family $\mathcal R$. However, as soon as $\mathcal R$ contains every compact deterministic set, if a random attractor for $\mathcal{R}$ exists then it is unique (cf. \cite{Crauel-Scheutzow-2018}). Notice this is the case if  $\mathcal R \in\{\mathcal D(X),\mathcal T(X)\}$.
\end{remark}

\section{A predator-prey system with disease and real noise in prey}\label{section:model}
In this section we consider random perturbations of an eco-epidemiological model discussed in~\cite{jesus-silva-vilarinho-pre}, that generalizes a model in~\cite{Niu-Zhang-Teng-AMM-2011}. This model contains a general functions corresponding to predation on uninfected and infected preys populations.
\subsection{A random eco-epidemiological model}
We consider an eco-epidemiological model in which we have two populations, the prey population ($S$) and the predators ($P$), with the prey population infected with and infectious disease ($I$). We consider a general function corresponding to the predation on uninfected prey and also to the vital dynamics of predator population and a random  birth rate of the prey population, modelled by a random variable as follows:
\begin{equation} \label{Eq. Principal random}
\begin{cases}
S'(t,\omega)=\Lambda(\theta_t\omega)-\mu S(t)-f(S(t),I(t),P(t))P(t)-\beta S(t)I(t)\\
I'(t,\omega)=\beta S(t)I(t)-\eta g(S(t),I(t),P(t))I(t)-c I(t)\\
P'(t,\omega)=\gamma f(S(t),I(t),P(t))P(t)+r \eta g(S(t),I(t),P(t))I(t)-\delta_1 P(t)-\delta_2 P(t)^2
\end{cases}
\end{equation}
where $(\Omega, \mathcal{F}, \mathbb{P}, \theta)$ is a {metric dynamical system} that drives the noise and:
\begin{enumerate}[label=(H\arabic*)]
\item $\mu, \beta,r,c,\gamma,\delta_1$ and $\delta_2$ are all positive constants, and we assume that $ \mu <c$;
\item functions $ f,g:(\mathbb{R}_0^+)^3 \to \mathbb{R}_0^+ $ are locally  Lipschitz and satisfy
\begin{enumerate}[label=(\roman*)]
\item $S \rightarrow f(S,I,P)$ and $P\mapsto g(S,I,P)$ are nondecreasing,
\item $I \rightarrow f(S,I;P)$, $P \rightarrow f(S,I,P)$, $S\mapsto g(S,I,P)$ and $I\mapsto g(S,I;P)$ are nonincreasing,
\item $f(0,I,P)=0$ and $g(S,I,0)=0$;
\item $f(S,0,0)>0$ whenever $S>0$;
\end{enumerate}\label{cond:fg}
\item $\Lambda: \Omega \rightarrow \mathbb{R}^+$ is a measurable function such that
\begin{equation}\label{Lambda}
\Lambda(\omega) \in [\Lambda^\ell, \Lambda^u]:= q_0[1-\epsilon;1+\epsilon],
\end{equation}
with $q_0>0, \epsilon \in (0,1)$, for all $\, \omega \in \Omega$, and such that the function $t \mapsto \Lambda(\theta_t\omega)$ is continuous.\label{H3}
\end{enumerate}
Typically, the functional response of the predator to prey is given by some particular function. In this paper we take as reference the model proposed in~\cite{jesus-silva-vilarinho-pre} that generalizes the one in~\cite{Lu-Wang-Liu-DCDS-B-2018} by considering general functions corresponding to the predation of uninfected and infected prey. Besides the population compartments, given by $S$, $I$ and $P$ that correspond, respectively, to the susceptible prey, infected prey and predator, we may understood $\Lambda$ and $\mu$ as the (random) recruitment rate and the natural death rate of prey population, respectively, $\beta$ as the incidence rate of the disease, $\eta$ as the predation rate of infected prey, $c$ as the death rate in the infective class, $\gamma$ as the
rate converting susceptible prey into predator (biomass transfer), $r$ as the rate of converting infected prey into predator, $f(S,I,P)$ is the predation of susceptible prey and $g(S,I,P)$ is the predation of infected prey. It is assumed that only susceptible preys $S$ are capable of reproducing, i.e, the infected prey is removed by death (including natural and disease-related death) or by predation before having the possibility of reproducing.

Note that our setting includes several of the most common functional responses for both functions $f$ and $g$: $f(S,I,P)=kS$ and $g(S,I,P)=kP$ (Holling-type I), $f(S,I,P)=kS/(1+m(S+I))$ and $g(S,I,P)=kP/(1+m(S+I))$ (Holling-type II), $f(S,I,P)=kS^\alpha/(1+m(S+I)^\alpha)$ and $g(S,I,P)=kP^\alpha/(1+m(S+I)^\alpha)$ (Holling-type III), $f(S,I,P)=kS/(a+b(S+I)+c(S+I)^2)$ and $g(S,I,P)=kP/(a+b(S+I)+c(S+I)^2)$ (Holling-type IV), $f(S,I,P)=kS/(a+b(S+I)+cP)$ and $g(S,I,P)=kP/(a+b(S+I)+cP)$ (Beddington-De Angelis), $f(S,I,P)=kS/(a+b(S+I)+cP+d(S+I)P)$ and $g(S,I,P)=kP/(a+b(S+I)+cP+d(S+I)P)$ (Crowley-Martin). Also note that conditions in~\ref{cond:fg} are natural from a biological perspective and they are satisfied by the usual functional responses considered in the literature.

\begin{remark}
  Although it can be possible to consider a more generalised model in which some others coefficients can also be random, in this particular case we consider just one to highlight the technique and specificities of this model. For examples of bounded \emph{real noise} as considered in this model (in particular, as in~\ref{H3}) see for instance \cite{asai-kloeden-1013, caraballo-colucci-lopez-de-la-cruz-Rapaport-2019}.
\end{remark}

\subsection{Existence and properties of solutions}
In this section we prove the existence, uniqueness and boundedness of solutions to \eqref{Eq. Principal random} with nonnegative initial conditions on the populations. Moreover, we prove that the solution mapping gives rise to a RDS. We start by showing  that nonnegative initial conditions for the populations remains nonnegative, avoiding meaningless solutions in biological contexts.

\begin{lemma}\label{l:inv reg}
The set
\begin{equation}
\R_+^3=\{(S,I,P)\in \R^3:S\geq 0, I\geq 0, P\geq 0\}
\end{equation}
is positively invariant for the system \eqref{Eq. Principal random} for each fixed $\omega \in \Omega$.
\end{lemma}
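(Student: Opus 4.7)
Fix $\omega\in\W$ and view~\eqref{Eq. Principal random} as a non-autonomous ODE on $\R^3$. Local existence and uniqueness of a solution with prescribed initial condition in $\R_+^3$ follow from the classical Picard--Lindel\"of theorem: the map $t\mapsto\Lambda(\theta_t\omega)$ is continuous by~\ref{H3}, and the remaining part of the right-hand side is locally Lipschitz in $(S,I,P)$ because $f$ and $g$ are locally Lipschitz by~\ref{cond:fg} and the other nonlinearities are polynomial. To establish positive invariance of $\R_+^3$, I would show that each component remains nonnegative when it starts so, handling the three equations in the order $I$, $P$, $S$ to avoid circular dependencies.

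For nonnegativity of $I$, the second equation reads $I'(t)=k_I(t)\,I(t)$, where $k_I(t)=\beta S(t)-\eta\,g(S(t),I(t),P(t))-c$ is continuous along the solution. Variation of constants gives $I(t)=I(0)\exp\!\bigl(\int_0^t k_I(s)\,ds\bigr)\geq 0$ whenever $I(0)\geq 0$. Similarly, the third equation can be recast as $P'(t)=k_P(t)\,P(t)+h(t)$ with $k_P(t)=\gamma f(S,I,P)-\delta_1-\delta_2 P(t)$ and $h(t)=r\eta\,g(S,I,P)\,I(t)$; since $g\geq 0$ and the previous step yields $I(t)\geq 0$, the forcing term $h$ is nonnegative, and variation of constants produces $P(t)=P(0)\,\e^{\int_0^t k_P}+\int_0^t \e^{\int_s^t k_P(u)\,du}\,h(s)\,ds\geq 0$ for $P(0)\geq 0$.

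For $S$, I would use the barrier property built into the first equation: by~\ref{cond:fg}(iii) we have $f(0,I,P)=0$, so at any time $\tau$ with $S(\tau)=0$ the first equation reduces to $S'(\tau)=\Lambda(\theta_\tau\omega)\geq \Lambda^\ell>0$. A standard first-crossing argument then excludes $S$ from ever becoming negative: if $t_\ast$ were the infimum of times with $S(t_\ast)<0$, continuity and $S(0)\geq 0$ would force $S(t_\ast)=0$, but the strict positivity of $S'(t_\ast)$ contradicts the definition of $t_\ast$.

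There is no deep technical obstacle; the only care required is the ordering of the three sub-arguments, since the nonnegativity of $P$ rests on that of $I$ through the forcing $h$, while neither argument needs the sign of $S$ to be known in advance. Global definition of the solution on $[0,\infty)$ is not required for invariance and would be addressed separately via a priori bounds.
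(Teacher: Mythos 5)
Your argument is correct and rests on the same basic observations as the paper's proof, namely the sign structure of each equation on the corresponding boundary face of $\R_+^3$, but the execution is genuinely different in style. For $I$ and $P$ the paper argues geometrically: the planes $I=0$ and $P=0$ are invariant (the latter because $g(S,I,0)=0$ kills the coupling term $r\eta g I$ there), and the vector field never points out of $\R_+^3$, so trajectories cannot cross the boundary. You instead write each of these equations as $x'=k(t)x+h(t)$ with $h\geq 0$ and read nonnegativity off the variation-of-constants formula; in particular your treatment of $P$ uses only $g\geq 0$ together with $I\geq 0$ from the previous step, and does not need the hypothesis $g(S,I,0)=0$ at all, so it is marginally more robust, and the explicit exponential representation makes the implicit appeal to uniqueness in the paper's ``invariant plane'' argument unnecessary. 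For $S$ both proofs exploit $f(0,I,P)=0$ to get $S'=\Lambda(\theta_t\omega)\geq\Lambda^\ell>0$ on $\{S=0\}$; your first-crossing argument is a precise version of what the paper compresses into ``the vector field at the boundary never points outwards,'' and it replaces the paper's somewhat tangential explicit integration along the $S$-semi-axis. One technicality shared by both proofs and left unaddressed: $f$ and $g$ are only defined on $(\R_0^+)^3$, so to even speak of a solution with a negative component one should either extend $f,g$ in a locally Lipschitz way or run the argument on the maximal interval during which the solution remains in the domain; this is harmless but worth a sentence if you want the first-crossing step to be airtight.
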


\begin{proof}
The planes $ I=0 $ and $ P=0 $ are invariant since on it we have $ I'(t,\w)=0 $ and $ P'(t,\w)=0$, respectively, and $S'(t,\w)>0$ on the plane $S=0$.

If we start on the positive $P$-semi axes we have $S'(t,\w)>0$ and $I'(t,\w)=0$, so that the solution remains on $\R_+^3\cap\{I=0\}$, while if we start on the positive $I$-semi axes we have $S'(t,\w)>0$ and $P'(t,\w)=0$, so that solution does not leave $\R_+^3$.

Finally, we claim that the positive $S$-semi axes is invariant. Indeed on this semi axe we have $I'(t,\w)=P'(t,\w)=0$ and
\begin{equation}\label{Eq.S1}
S'(t,\omega)=\Lambda(\theta^t\omega)-\mu(t)S(t).
\end{equation}

That is, writing $ S_0=S(t_0,\omega)$ for the initial condition of population $S$ on time $t_0$, we have for the corresponding solution
\[
S(t;t_0,\omega,S_0)=S_0e^{-\mu(t-t_0)}+e^{-\mu t}\int_{t_0}^{t}\Lambda(\theta_s\omega)e^{\mu s}ds.
\]

Since $ \Lambda^\ell \leq \Lambda(\theta_t\omega) \leq \Lambda^u, $ the last term is bounded:
\[
\Lambda^\ell e^{-\mu t}\int_{t_0}^{t}e^{\mu s}ds \leq e^{-\mu t}\int_{t_0}^{t}\Lambda(\theta_s\omega)e^{\mu s}ds \leq \Lambda^u e^{-\mu t}\int_{t_0}^{t}e^{\mu s}ds,
\]
hence
\[
\Lambda^\ell(1-e^{-\mu(t-t_0)}) \leq \mu e^{-\mu t}\int_{t_0}^{t}\Lambda(\theta_s\omega)e^{\mu s}ds \leq \Lambda^u (1-e^{-\mu(t-t_0)}).
\]
In particular, for nonnegative initial condition $S_0$ the population $S(t;t_0,\w,S_0)$ remains nonnegative.
We conclude that the vector field at the boundary of $ \R_+^3 $ never points outwards.

\end{proof}

Set  $a^u=\max\{\gamma,r\}$, $a^\ell=\min\{\gamma,r\}$. To simplify the notation, unless stated otherwise, given $(S_0,I_0,P_0)\in \R_+^3$ we write 
$S=S(t;t_0,\omega,S_0) $, $ I=I(t;t_0,\omega,I_0) $ and $ P=P(t;t_0,\omega,P_0) $ to be the components of the solution $u(t;t_0,\omega,u_0) $ of system~\eqref{Eq. Principal random} with initial state $ u_0=(S_0,I_0,P_0) \in \mathbb{R}_+^3$ at time $t=t_0$ and fixed $\w\in\W$. Moreover, define $M_0=a^\ell S_0+r I_0+P_0$, $N_0=a^u S_0+r I_0+P_0$, 
$M=M(t;t_0,\omega,M_0)=a^\ell S+r I+P$ and $N=N(t;t_0,\omega,N_0)=a^u S+r I+P$. This notation will also be used for the particular situation $t_0=0$, which should become clear from the context. In this case we should write $
h= h(t;0,\w,h_0)=h(t;\omega,h_0),
$
for $h$ = $S$, $I$, $P$, $M$ and $N$ (and also for $h=V$ and $h=W$ to be defined later).

In the following we provide thresholds for forward invariant subsets of $\R_+^3$. Set
\[
\Theta^u=\frac{a^u\Lambda^u}{\min\{\mu,\delta_1\}},
\]
and, for $\delta\geq 0$,
\[
\Theta_\delta^\ell=\max\left\{0,\frac{a^\ell\Lambda^\ell-\delta_2(\Theta^u+\delta)^2}{\max\{c,\delta_1\}}\right\},
\]
and set  $\Theta^\ell=\Theta_0^\ell$.

\begin{proposition}\label{pr:inv region}
For each $\delta \geq 0$ the region
\begin{equation}\label{regiao_inv}\mathcal K_\delta=\left\{(S_0,I_0,P_0) \in \R_+^3: \Theta_\delta^\ell\leq M_0 \leq N_0 \leq \Theta^u+\delta\right\}
\end{equation}
is positively invariant for the system~\eqref{Eq. Principal random} for each $\omega\in\Omega$.
\end{proposition}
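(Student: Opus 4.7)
The plan is to show that for each fixed $\omega\in\Omega$ and each initial condition in $\mathcal K_\delta$ the two linear combinations $M=a^\ell S+rI+P$ and $N=a^u S+rI+P$ along the solution remain in $[\Theta_\delta^\ell,\infty)$ and $(-\infty,\Theta^u+\delta]$ respectively. Lemma~\ref{l:inv reg} already yields $S,I,P\ge 0$, and $a^\ell\le a^u$ gives $M\le N$ automatically; the remaining work is to extract one-sided linear differential inequalities for $M$ and $N$ from~\eqref{Eq. Principal random} and then apply a Gronwall-type comparison to each.

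Differentiating $N$ along the system and collecting terms produces
\[
N' = a^u\Lambda(\theta_t\omega) - a^u\mu S - rcI - \delta_1 P - \delta_2 P^2 + (\gamma-a^u)fP + (r-a^u)\beta SI.
\]
Because $a^u=\max\{\gamma,r\}$, the coefficients $(\gamma-a^u)$ and $(r-a^u)$ are both nonpositive, so together with $-\delta_2 P^2\le 0$ they may be discarded when seeking an upper bound. Using $\mu<c$ from (H1), so that $\min\{\mu,c,\delta_1\}=\min\{\mu,\delta_1\}$, what remains is bounded by
\[
N' \le a^u\Lambda^u - \min\{\mu,\delta_1\}\,N.
\]
Integrating yields $N(t)\le\Theta^u+(N_0-\Theta^u)e^{-\min\{\mu,\delta_1\}(t-t_0)}$; since $N_0\le\Theta^u+\delta$ this stays at most $\Theta^u+\delta$ for every $t\ge t_0$.

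The lower bound on $M$ is handled dually. With $a^\ell=\min\{\gamma,r\}$, the analogous cross terms $(\gamma-a^\ell)fP$ and $(r-a^\ell)\beta SI$ are now both nonnegative and may be dropped when seeking a lower bound; to dispose of the quadratic $-\delta_2 P^2$ I would invoke the bound on $N$ just established, using $0\le P\le N\le\Theta^u+\delta$, to obtain
\[
M' \ge a^\ell\Lambda^\ell - \max\{c,\delta_1\}\,M - \delta_2(\Theta^u+\delta)^2.
\]
If $a^\ell\Lambda^\ell-\delta_2(\Theta^u+\delta)^2\le 0$ then $\Theta_\delta^\ell=0$ and the conclusion reduces to the already known $M\ge 0$; otherwise $\Theta_\delta^\ell$ is precisely the equilibrium of the associated linear equation, and comparison gives $M(t)\ge\Theta_\delta^\ell+(M_0-\Theta_\delta^\ell)e^{-\max\{c,\delta_1\}(t-t_0)}\ge\Theta_\delta^\ell$.

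The only mild subtlety is the order of the two estimates: the lower bound for $M$ relies on an \emph{a priori} uniform upper bound on $P$, so the computation for $N$ must come first. Once this is noticed, the signs of the cross terms fall out uniformly from the definitions of $a^\ell$ and $a^u$ with no need to split into the cases $\gamma\le r$ and $\gamma>r$, and the rest is a routine linear comparison carried out $\omega$ by $\omega$ using the uniform bounds $\Lambda^\ell\le\Lambda(\theta_t\omega)\le\Lambda^u$ from (H3).
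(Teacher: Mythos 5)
Your proposal is correct and follows essentially the same route as the paper's proof: bound $N'$ from above by a linear expression after discarding the nonpositive cross terms, integrate to get $N\le\Theta^u+\delta$, then use $0\le P\le N\le\Theta^u+\delta$ to control the quadratic term in the lower differential inequality for $M$ and integrate again. The one point you make explicit that the paper leaves implicit --- that $\mu<c$ is what makes the decay rate $\min\{\mu,\delta_1\}$ rather than $\min\{\mu,c,\delta_1\}$, and that the $N$-estimate must precede the $M$-estimate --- is a welcome clarification, not a deviation.
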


\begin{proof}
Let $\delta>0$ and $\omega \in \Omega$ be fixed. For $t_0 \geq 0$, we have
\begin{equation}\label{Eq:1.K}
\begin{split}
N'
& = a^u\Lambda(\theta_t\omega)-a^u\mu S-a^uf(S,I,P)P-a^u\beta SI\\
&+r\beta SI -r\eta g(S,I,P)I-rc I\\
&+\gamma f(S,I,P)P+r\eta g(S,I,P)I-\delta_1 P-\delta_2 P^2\\
& \leq a^u\Lambda^u-a^u\mu S-rc I-\delta_1 P+(\gamma -a^u)f(S,I,P)P\\
& \quad +\beta (r-a^u)SI.\\
\end{split}
\end{equation}
Since $\gamma-a^u \leq 0$ and $r-a^u \leq 0,$ we have
\begin{equation}\label{eq.C}
\begin{split}
N'\leq a^u\Lambda^u- \min\{\mu,\delta_1\}N.
\end{split}
\end{equation}
This implies
\begin{equation}\label{eq.C1}
\begin{split}
N\leq \Theta^u+ (N_0-\Theta^u)e^{-\min\{\mu,\delta_1\}(t-t_0)}.
\end{split}
\end{equation}
Similarly,
\begin{equation}
\begin{split}
M'
& \geq a^\ell\Lambda^\ell-\max\{c,\delta_1\}M\\
& \quad +(\gamma-a^\ell)f(S,I,P)P+\beta(r-a^\ell)SI\\
& \quad -\delta_2P^2. \label{Eq:2.k}
\end{split}
\end{equation}
Recall that $\gamma-a^\ell\geq0$ and $r-a^\ell\geq0$, and if $N_0\leq\Theta^u+\delta$, from~\eqref{eq.C1} we have $N\leq \Theta^u+\delta$ for all $t\geq t_0$ and
\begin{equation}\label{Eq:3.K}
\begin{split}
M'
& \geq a^\ell\Lambda^\ell-\delta_2(\Theta^u+\delta)^2 -\max\{c,\delta_1\}M,
\end{split}
\end{equation}
which implies, in this situation,
\begin{equation}\label{Eq:3.K1}
\begin{split}
M
& \geq \Theta_\delta^\ell+\left(M_0-\Theta_\delta^\ell\right)e^{-\max\{c,\delta_1\}(t-t_0)}.
\end{split}
\end{equation}
Thus if $u_0=(S_0,I_0,P_0) \in \mathcal{K}_\delta $ then $ u(t;t_0,\omega,u_0) \in \mathcal{K}_\delta \,\text{ for all } t \ge t_0 .$
\end{proof}

\begin{corollary}\label{CoralA1.1}
For all $\omega \in \Omega$, $t_0 \in \mathbb{R}_0^+$ and $(S_0, I_0, P_0) \in \mathbb{R}_+^3$ we have
\begin{equation*}
\lim_{t \to \infty}N(t;t_0,\omega,N_0) \in [\Theta^\ell, \Theta^u]\quad\text{ and }\quad   \lim_{t \to \infty}M(t;t_0,\omega,M_0) \in [\Theta^\ell, \Theta^u].
\end{equation*}
\end{corollary}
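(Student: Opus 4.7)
The statement should be read as: all accumulation points of $N(t;t_0,\omega,N_0)$ and $M(t;t_0,\omega,M_0)$ as $t\to\infty$ lie in $[\Theta^\ell,\Theta^u]$, i.e., $\limsup \le \Theta^u$ and $\liminf \ge \Theta^\ell$ for both quantities. The plan is to extract this from the same differential inequalities already derived in the proof of Proposition~\ref{pr:inv region}, but now without assuming that the initial condition lies in $\mathcal{K}_\delta$: invariance gave positive invariance, whereas here we need asymptotic absorption into $[\Theta^\ell,\Theta^u]$ for arbitrary initial data in $\R^3_+$.

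First, I would observe that the bound \eqref{eq.C1}, valid for any $N_0\ge 0$, gives
\[
\limsup_{t\to\infty} N(t;t_0,\omega,N_0) \le \Theta^u,
\]
since the exponentially decaying term vanishes. Because $N=a^u S+rI+P$ with $S\ge 0$, it follows that $P(t)\le N(t)$. Hence, for any fixed $\delta>0$, there exists $T_\delta=T_\delta(\omega,t_0,N_0)\ge t_0$ such that $P(t)\le N(t)\le \Theta^u+\delta$ for all $t\ge T_\delta$.

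Next, for $M$ I would reuse the differential inequality \eqref{Eq:2.k}: dropping the nonnegative terms $(\gamma-a^\ell)f(S,I,P)P$ and $\beta(r-a^\ell)SI$ and substituting $P\le \Theta^u+\delta$ for $t\ge T_\delta$ yields
\[
M'(t) \ge a^\ell \Lambda^\ell - \delta_2(\Theta^u+\delta)^2 - \max\{c,\delta_1\}\, M(t), \qquad t\ge T_\delta.
\]
A standard comparison argument (Gronwall, as in the passage from \eqref{Eq:3.K} to \eqref{Eq:3.K1}) then gives
\[
M(t) \ge \Theta^\ell_\delta + \bigl(M(T_\delta)-\Theta^\ell_\delta\bigr) e^{-\max\{c,\delta_1\}(t-T_\delta)},
\]
so $\liminf_{t\to\infty} M(t) \ge \Theta^\ell_\delta$. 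Letting $\delta\to 0^+$ and using continuity of $\delta\mapsto \Theta^\ell_\delta$ at $0$ yields $\liminf_{t\to\infty} M(t)\ge \Theta^\ell$. Finally, since $M\le N$ pointwise (as $a^\ell\le a^u$ and $S\ge 0$), we also obtain $\liminf N \ge \liminf M \ge \Theta^\ell$ and $\limsup M \le \limsup N \le \Theta^u$, completing the proof.

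The only subtle step is the $\delta\to 0^+$ passage: one must ensure the time $T_\delta$ at which the asymptotic bound on $P$ kicks in does not degrade the argument. It does not, because the Gronwall estimate is applied separately for each fixed $\delta$, and the resulting lower bound $\Theta^\ell_\delta$ depends only on $\delta$ (not on $T_\delta$), so one takes $\liminf$ first and then passes to the limit in $\delta$.
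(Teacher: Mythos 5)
Your proposal is correct and follows essentially the same route as the paper: the upper bound comes from the comparison estimate \eqref{eq.C1}, and the lower bound comes from feeding $N<\Theta^u+\delta$ (for large $t$) into \eqref{Eq:3.K1} and using $M\le N$. You are in fact slightly more careful than the paper in making the $\delta\to 0^+$ passage explicit (the paper stops at $\Theta_\delta^\ell$ with $\delta$ arbitrary), but the substance is identical.
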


\begin{proof}
From~\eqref{eq.C1} we have
\[
\begin{split}
\lim_{t \to \infty}M(t;t_0,\omega,M_0) &\leq \lim_{t \to \infty}N(t;t_0,\omega,N_0)\\ & \leq \lim_{t \to \infty}\Theta^u+(N_0-\Theta^u)e^{-\min{\{\mu,\delta_1\}(t-t_0)}}\\ &=\Theta^u.
\end{split}
\]
In particular, for any $\delta>0$, if $t$ is sufficiently large we have $N< \Theta^u+\delta$, and from \eqref{Eq:3.K1} follows that
\[
\begin{split}
\lim_{t \to \infty}N(t;t_0,\omega,N_0) &\geq \lim_{t \to \infty}M(t;t_0,\omega,M_0)\\ & \geq \lim_{t \to \infty}\Theta_\delta^\ell+(M_0-\Theta_\delta^\ell)e^{-\max{\{c,\delta_1\}(t-t_0)}}\\&
=\Theta_\delta^\ell.\\
\end{split}
\]
\end{proof}

\begin{theorem}\label{1A}
For any $\omega \in \Omega$, $t_0 \in \mathbb{R}_0^+$ and any initial condition
$u_0=(S_0,I_0,P_0)\in \R_+^3$ the system~\eqref{Eq. Principal random} admits a unique bounded solution $u(\cdot)=u(\cdot;t_0,\omega,u_0) \in \mathcal{C}([t_0,+\infty),\R_+^3)$, with $u(t_0;t_0,\omega,u_0)=u_0$. Moreover, the solution generates a RDS $(\theta,\phi)$ defined as
\begin{equation}\label{RDE to RDS}
\phi(t,\omega,u_0)=u(t;0,\omega,u_0),\, \text{ for all }\, t \geq 0, u_0 \in \R_+^3 \, \text{and} \,\,\omega \in \Omega.
\end{equation}
\end{theorem}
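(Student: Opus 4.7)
The plan is to reduce the statement to three essentially standard ingredients — local existence/uniqueness, a priori boundedness, and the uniqueness-plus-time-shift argument for the cocycle property — each of which is already set up by the hypotheses and the preceding results.

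First, I would fix $\omega\in\Omega$ and view \eqref{Eq. Principal random} as a nonautonomous ODE $u'=F(t,u)$ with
\[
F(t,(S,I,P))=\bigl(\Lambda(\theta_t\omega)-\mu S-f(S,I,P)P-\beta SI,\;\beta SI-\eta g(S,I,P)I-cI,\;\ldots\bigr).
\]
By \ref{H3} the map $t\mapsto\Lambda(\theta_t\omega)$ is continuous, and by \ref{cond:fg} the functions $f$ and $g$ are locally Lipschitz, so $F$ is continuous in $t$ and locally Lipschitz in $u$ uniformly on compact $t$-intervals. Classical Picard–Lindel\"of then gives a unique local solution $u(\cdot;t_0,\omega,u_0)$ on some maximal interval $[t_0,t_0+T_{\max})$.

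Second, I would upgrade local to global existence by ruling out finite-time blow-up. Lemma~\ref{l:inv reg} keeps the solution in $\R_+^3$, and the computation carried out in the proof of Proposition~\ref{pr:inv region} yields $N'\le a^u\Lambda^u-\min\{\mu,\delta_1\}N$ on $\R_+^3$. Gr\"onwall then gives $N(t)\le\max\{N_0,\Theta^u\}$ on $[t_0,t_0+T_{\max})$, and since $S,I,P\ge 0$ are each dominated by $N/\min\{a^\ell,r,1\}$, the solution stays in a compact set. Standard continuation then forces $T_{\max}=+\infty$, and the solution $u(\cdot;t_0,\omega,u_0)$ belongs to $\mathcal C([t_0,+\infty),\R_+^3)$ and is bounded.

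Third, setting $\phi(t,\omega,u_0):=u(t;0,\omega,u_0)$, I need to verify that $(\theta,\phi)$ is a RDS. Initial condition $\phi(0,\omega,u_0)=u_0$ is immediate. For the cocycle identity, fix $s,t\ge 0$ and define $v(\tau):=u(\tau+s;0,\omega,u_0)$ on $[0,\infty)$; since $\Lambda(\theta_{\tau+s}\omega)=\Lambda(\theta_\tau(\theta_s\omega))$ and the remaining coefficients are autonomous, $v$ solves \eqref{Eq. Principal random} with noise realization $\theta_s\omega$ and initial datum $v(0)=\phi(s,\omega,u_0)$. By uniqueness $v(\tau)=\phi(\tau,\theta_s\omega,\phi(s,\omega,u_0))$, which is exactly the cocycle property at $\tau=t$. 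Continuity of $u_0\mapsto\phi(t,\omega,u_0)$ follows from continuous dependence on initial data for locally Lipschitz ODEs (obtained by Gr\"onwall on the difference of two solutions). Finally, joint measurability $(t,\omega,u_0)\mapsto\phi(t,\omega,u_0)$ reduces, via the measurability of $\omega\mapsto\Lambda(\theta_\cdot\omega)$ and the continuity in $(t,u_0)$, to a Carath\'eodory-type argument.

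The main obstacle I expect is the measurability in $\omega$: continuity in $(t,u_0)$ is easy, but producing joint measurability rigorously requires either (i) invoking a Carath\'eodory/Filippov-style measurable-selection result on the solution operator for ODEs with a measurable time-dependent right-hand side, or (ii) approximating $\Lambda(\theta_t\omega)$ by simple functions in $\omega$, solving the resulting ODEs (whose solutions are measurable in $\omega$ by the implicit function theorem argument on Picard iterates), and passing to the limit using continuous dependence. Both approaches are standard but require some care given only the pointwise continuity of $t\mapsto\Lambda(\theta_t\omega)$ assumed in \ref{H3}.
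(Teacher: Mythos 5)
Your proposal is correct and follows essentially the same route as the paper: local Lipschitz continuity of the right-hand side plus the a priori bound $N'\le a^u\Lambda^u-\min\{\mu,\delta_1\}N$ on the invariant cone $\R_+^3$ gives a unique bounded global solution, and the time-shift/uniqueness argument yields the cocycle property. The only difference is one of detail rather than strategy: the paper delegates the joint measurability and the RDS generation to \cite[Theorem 2.2.2]{Arnold}, whereas you verify the cocycle identity by hand and flag the measurability issue explicitly — a worthwhile precaution, since that is exactly the point the cited theorem is designed to settle.
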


\begin{proof}
The system~\eqref{Eq. Principal random}  can be rewritten in the following form
\begin{equation}\label{eq.F}
u'(t)=F(\theta_t\omega,u).
\end{equation}
Since $t\mapsto\Lambda(\theta_t\omega)$ is continuous, the map $F_\w(t,u)=F(\theta_{t}\omega,u) \in \mathcal{C}([t_0,+\infty)\times\R_+^3,\R_+^3)$ and is locally Lipschitz respect to $u$. From Corollary~\ref{CoralA1.1}, all the solutions $u(t)$ are bounded. Thus for each $\w\in\W$ the system~\eqref{Eq. Principal random} possesses a unique global solution $ u(t;t_0,\omega,u_0)$ with initial condition $u(t_0)=u_0$.

Since $ F(\theta_t\omega,u)=F(t,\w,u)$ is also measurable in $ \omega $, the map
\[u(\cdot;t_0,\cdot,\cdot):[t_0,\infty) \times \Omega \times \R_+^3  \rightarrow \R_+^3\] is $ (\mathcal{B}([t_0,\infty)) \otimes \mathcal{F} \otimes \mathcal B(\R_+^3) ,\mathcal B(\R_+^3)) $-measurable. From~\cite[Theorem 2.2.2]{Arnold}, the solutions of system~\eqref{Eq. Principal random} generate a RDS via ~\eqref{RDE to RDS}. We remark in particular the cocycle property of $(\theta,\varphi)$ that can be obtained through
\[
u(t+t_0;t_0,\omega,u_0)=u(t;0,\theta_t\omega,u_0),
\]
for all $t\geq t_0\geq0, \w\in\W$ and $u_0\in\R_+^3$.
\end{proof}

\section{Random attractors}\label{section:random_attractors}

\subsection{Deterministic global random attractor}
We establish now the existence of a pullback $\mathcal{T}(\R_+^3)$ attractor for system~\eqref{Eq. Principal random}.

\begin{theorem}
The RDS $(\theta,\phi)$ generated by~\eqref{Eq. Principal random} possesses a unique global random attractor.
\end{theorem}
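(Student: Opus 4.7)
The plan is to apply Proposition~\ref{PropA} with the family $\mathcal{R}=\mathcal{T}(\R_+^3)$. Since this family contains every deterministic compact set, Remark~\ref{unique} immediately supplies uniqueness, so the whole task collapses to exhibiting a single compact tempered absorbing set.

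The candidate is suggested by Proposition~\ref{pr:inv region} and Corollary~\ref{CoralA1.1}. For a fixed $\delta>0$ I would take the deterministic compact set
\[
\Gamma=\set{u=(S,I,P)\in\R_+^3: a^u S+r I+P\le\Theta^u+\delta},
\]
which lies in $\mathcal{D}(\R_+^3)\subseteq\mathcal{T}(\R_+^3)$; it is closed and bounded in $\R_+^3$, hence compact. I deliberately drop the lower bound $\Theta_\delta^\ell\le M_0$ present in the positively invariant region $\mathcal K_\delta$ of Proposition~\ref{pr:inv region}: estimate~\eqref{Eq:3.K1} only guarantees that $M(t)$ approaches $\Theta_\delta^\ell$ \emph{from below} when $M_0<\Theta_\delta^\ell$, so $\mathcal K_\delta$ is positively invariant but not absorbing. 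Retaining only the upper bound sidesteps this issue while still producing a compact set.

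To verify absorption, fix $K\in\mathcal{T}(\R_+^3)$ and set
\[
R(\omega,t)=\sup_{u_0\in K(\theta_{-t}\omega)}\prts{a^u S_0+r I_0+P_0}.
\]
Estimate~\eqref{eq.C1} applied with $t_0=0$ and noise realization $\theta_{-t}\omega$ gives, uniformly for $u_0\in K(\theta_{-t}\omega)$,
\[
N(t;0,\theta_{-t}\omega,u_0)\le\Theta^u+\bigl(R(\omega,t)-\Theta^u\bigr)^+ e^{-\min\{\mu,\delta_1\}t}.
\]
Temperedness of $K$ forces $e^{-\beta t}R(\omega,t)\to 0$ for every $\beta>0$; choosing $0<\beta<\min\{\mu,\delta_1\}$ makes the right-hand side tend to $\Theta^u$ as $t\to\infty$. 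Hence there exists $T_K(\omega)>0$ such that $\phi(t,\theta_{-t}\omega,K(\theta_{-t}\omega))\subseteq\Gamma$ for all $t\ge T_K(\omega)$, which is exactly the required absorption. Proposition~\ref{PropA} then delivers the unique global random attractor.

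The main obstacle, and really the only step with any content, is matching the forward-in-time estimate~\eqref{eq.C1} to the pullback formulation of absorption: \eqref{eq.C1} propagates $N_0$ forward in time, while absorption demands control over initial data drawn from $K(\theta_{-t}\omega)$. The definition of temperedness is tailored precisely to close this gap, since it allows the $\omega$-dependent growth of $K(\theta_{-t}\omega)$ to be dominated by the deterministic exponential decay rate $\min\{\mu,\delta_1\}$ provided by the $\mu S$ and $\delta_1 P$ terms of the model. Everything else follows routinely from the invariance and growth estimates of Section~\ref{section:model}.
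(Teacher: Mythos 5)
Your proof is correct and follows essentially the same route as the paper: Proposition~\ref{PropA} applied to a compact deterministic (hence tempered) absorbing set extracted from the dissipativity estimate~\eqref{eq.C1} and the temperedness of $K$, with uniqueness supplied by Remark~\ref{unique}. The only deviation is that your absorbing set $\set{u\in\R_+^3: a^uS+rI+P\le\Theta^u+\delta}$ drops the lower constraint $\Theta_\delta^\ell\le M_0$ that the paper keeps in $\mathcal K_\delta$ (Proposition~\ref{PropB}); since your larger set is still compact, positively invariant and absorbing, the theorem follows either way.

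One correction, though: your stated reason for dropping the lower bound --- that $\mathcal K_\delta$ is ``positively invariant but not absorbing'' --- is wrong. The paper's Proposition~\ref{PropB} does prove absorption into $\mathcal K_\delta$, and the device that closes the gap you point to is the strict monotonicity of $\delta\mapsto\Theta_\delta^\ell$: for $0<\delta'<\delta$ one has $\Theta_{\delta'}^\ell>\Theta_\delta^\ell$ whenever $\Theta_\delta^\ell>0$, so applying~\eqref{Eq:3.K1} with the threshold $\Theta_{\delta'}^\ell$ once $N<\Theta^u+\delta'$ gives $\lim_{t\to\infty}M\ge\Theta_{\delta'}^\ell>\Theta_\delta^\ell$; even though $M$ may approach $\Theta_{\delta'}^\ell$ from below, it eventually exceeds the strictly smaller level $\Theta_\delta^\ell$, which is all absorption requires. (When $\Theta_\delta^\ell=0$ the lower constraint is vacuous by Lemma~\ref{l:inv reg}.) This does not affect the validity of your argument, but the sharper set $\mathcal K_\delta$ yields the better localization $\mathcal A(\w)\subseteq\mathcal K_\delta$ for every $\delta>0$, which is lost with your half-space.
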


The proof follows straightforward from Proposition \ref{PropA} and from the existence of a compact random absorbing set $\Gamma \in \mathcal{T}(\mathbb{R}_+^3)$ given by Proposition~\ref{PropB} below.

\begin{proposition}\label{PropB}
There exists a compact random absorbing set $\Gamma \in \mathcal{T}(\mathbb{R}_+^3)$ of the RDS $(\theta,\phi)$ generated by~\eqref{Eq. Principal random}. Moreover, for any $\delta>0$ the sets $\Gamma(\omega)$ can be chosen as the deterministic $\mathcal K_\delta$ for any $\w\in\W$.
%
\end{proposition}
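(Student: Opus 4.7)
\emph{Proof proposal.} The plan is to take $\Gamma(\w):=\mathcal K_\delta$ for every $\w\in\W$. Since $\mathcal K_\delta$ is a fixed closed bounded subset of $\R_+^3$ (boundedness is immediate from $a^u S+rI+P\leq \Theta^u+\delta$), the constant-fiber set so defined is compact and deterministic, hence lies in $\D(\R_+^3)\subseteq \mathcal T(\R_+^3)$, with measurability of $\w\mapsto \dist(x,\Gamma(\w))$ being automatic. Positive invariance is already delivered by Proposition~\ref{pr:inv region}, so only the absorbing property requires work.

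For absorption, I would exploit the two noise-independent linear differential inequalities that underpin the proof of Proposition~\ref{pr:inv region}. The upper estimate~\eqref{eq.C1}, applied with $t_0=0$ and noise $\theta_{-t}\w$, gives
\[
N(t;0,\theta_{-t}\w,N_0)\leq \Theta^u+(N_0-\Theta^u)\,e^{-\min\{\mu,\delta_1\}\,t}.
\]
For $u_0\in K(\theta_{-t}\w)$ with $K\in\mathcal T(\R_+^3)$, the quantity $N_0$ is bounded by a constant multiple of $\sup_{x\in K(\theta_{-t}\w)}\|x\|$; temperedness then forces the second term to decay to $0$ as $t\to\infty$, so there is a random time $T_1(\w)$ after which $N(t;0,\theta_{-t}\w,N_0)\leq \Theta^u+\delta/2$ uniformly for $u_0\in K(\theta_{-t}\w)$.

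Once the upper control on $N$ is in place throughout $[T_1(\w),\infty)$, the lower differential inequality~\eqref{Eq:3.K} (whose derivation required exactly such a bound on $N$) is available, and the comparison argument of Proposition~\ref{pr:inv region} yields
\[
M(t;0,\theta_{-t}\w,M_0)\geq \Theta_{\delta/2}^\ell+(M(T_1)-\Theta_{\delta/2}^\ell)\,e^{-\max\{c,\delta_1\}(t-T_1)}
\]
for every $t\geq T_1(\w)$. Since $M(T_1)\geq 0$ by Lemma~\ref{l:inv reg} and $\delta\mapsto \Theta_\delta^\ell$ is non-increasing (so $\Theta_{\delta/2}^\ell\geq\Theta_\delta^\ell$), one can fix a purely deterministic $T_2>0$ with $\Theta_{\delta/2}^\ell\,e^{-\max\{c,\delta_1\}T_2}\leq \Theta_{\delta/2}^\ell-\Theta_\delta^\ell$ (the case $\Theta_\delta^\ell=0$ being immediate from $M\geq 0$). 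Setting $T_K(\w):=T_1(\w)+T_2$ then forces $u(t;0,\theta_{-t}\w,u_0)\in\mathcal K_\delta$ for all $t\geq T_K(\w)$ and all $u_0\in K(\theta_{-t}\w)$, which is the required absorption.

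The main obstacle I anticipate is precisely this coordination of the two thresholds defining $\mathcal K_\delta$: the lower estimate for $M$ is unavailable until $N$ has been dominated, so absorption is not a one-step Gronwall computation but a two-stage pullback argument, with a $\delta/2$ cushion used to convert the strict exponentially-relaxing bounds into the non-strict inequalities $\Theta_\delta^\ell\leq M$ and $N\leq \Theta^u+\delta$.
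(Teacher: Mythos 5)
Your proposal is correct and follows essentially the same route as the paper: take $\Gamma(\w)=\mathcal K_\delta$ with constant fibers, use temperedness of $K$ together with the estimate \eqref{eq.C1} to pull $N$ below $\Theta^u+\delta$, and only then invoke the conditional lower estimate \eqref{Eq:3.K1} to push $M$ above $\Theta_\delta^\ell$ (treating $\Theta_\delta^\ell=0$ as the trivial case). The only, harmless, deviation is that you run the second comparison from $T_1(\w)$ using $M(T_1)\geq 0$ plus a deterministic extra wait $T_2$, whereas the paper runs it from $t=0$ with a cushion $0<\delta'<\delta$ and temperedness of $M_0$, both yielding $\liminf_{t\to\infty} M\geq\Theta_{\delta'}^\ell>\Theta_\delta^\ell$.
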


\begin{proof}
Consider $A \in \mathcal{T}(\mathbb{R}_+^3)$ and  $\delta >0 $. We want to prove that for each $\omega \in \Omega$ there exists $T_A(\omega)>0$ such that  for all $t \geq T_A(\omega)$
\[
\phi(t,\theta_{-t}\omega,A(\theta_{-t}\omega)) \subseteq \mathcal{K}_\delta.
\]
From Proposition~\ref{pr:inv region} the set $\mathcal{K}_\delta$ is positively invariant, which means that for all $t\geq0$
$$\varphi(t,\w,K_\delta)\subseteq K_\delta.$$
To simplify, we write $N(t;0,\omega,  N_0)=N(t;\omega,  N_0)$ and $M(t;0,   \omega,  N_0)=M(t;\omega,  N_0)$. Recall that given $u_0=(S_0,I_0,P_0)$ we write $M_0=a^\ell S_0+rI_0+P_0$ and $N_0=a^uS_0+rI_0+P_0$.
From~\eqref{eq.C1} we have
\[
\begin{split}
N(t;\theta_{-t}\omega,  N_0) \leq \Theta^u+\sup_{u_0 \in A(\theta_{-t}\omega)} \left(N_0-\Theta^u\right)e^{-\min{\{\mu,\delta_1\}t}}.
\end{split}
\]
Since $A$ is tempered,
\begin{equation}\label{Ineqo}
\lim_{t \to \infty} \sup_{u_0 \in A(\theta_{-t}\omega)} \left( N_0-\Theta^u\right)e^{-\min{\{\mu,\delta_1\}t}}=0
\end{equation}
and thus
\begin{equation}\label{Ineq1}
\lim_{t \to \infty}N(t;\theta_{-t}\omega,  N_0) \leq \Theta^u.
\end{equation}
Assume that $\Theta_\delta^\ell>0$, otherwise the result follows from Lemma~\ref{l:inv reg}. From~\eqref{Ineq1}, for any $0<\delta'<\delta$ and $t$ sufficiently large we have $N(t;\theta_{-t}\omega,  N_0)<\Theta^u+\delta'$ and from~\eqref{Eq:3.K1}, in this situation we get
\begin{equation*}
M(t;\theta_{-t}\omega, M_0)  \geq \Theta_{\delta'}^\ell+\inf_{u_0 \in A(\theta_{-t}\omega)}\left(M_0-\Theta_{\delta'}^\ell\right)e^{-\max\{c,\delta_1\}t}.
\end{equation*}
Since
\begin{equation}
\lim_{t \to \infty} \inf_{u_0 \in A(\theta_{-t}\omega)}\left(M_0-\Theta_{\delta'}^\ell\right)e^{-\max\{c,\delta_1\}t}=0
\end{equation}
we  have
\begin{equation}\label{Ineq2}
\lim_{t \to \infty}M(t;\theta_{-t}\omega, M_0) \geq \Theta_{\delta'}^\ell>\Theta_{\delta}^\ell.
\end{equation}
Henceforth there is $T_A(\w)$ such that for all $t\geq T_A(\w)$ we have for all $u_0\in A(\theta_{-t}\w)$ that
\[\Theta_{\delta}^\ell\leq M(t;\theta_{-t}\omega, M_0)\leq N(t;\theta_{-t}\omega, N_0)\leq \Theta^u+\delta\]
and the conclusion holds.
\end{proof}

 From Remark~\ref{unique}, the global random attractor is unique. From Remark~\ref{Rm1}, $(\theta,\varphi)$ possesses a forward attractor in probability and from Remark~\ref{TD}, it also possesses {global random $\mathcal{D}(\R_+^3)$ attractor}.

\subsection{Susceptible dynamics}

 \subsubsection{Random attractor for susceptible vital dynamics}\label{S vital}
If we have no predators neither infected preys, from~\eqref{Eq. Principal random} the dynamics of susceptible preys is given by
\begin{equation}\label{eq:auxiliary-S(w)}
S'(t,\omega)=\Lambda(\theta_t\omega)-\mu S(t).
\end{equation}
For each $ \omega\in\Omega, $ the solution of~\eqref{eq:auxiliary-S(w)} with initial condition $S_0\geq 0$ at $t=t_0$ is
\[
S(t;t_0,\omega,S_0)=S_0e^{-\mu t}+\int_{t_0}^{t}\Lambda(\theta_s\omega)e^{-\mu (t-s)}ds.
\]
Replacing $\omega$ by $\theta_{-t}\omega$, and taking $t_0=0$ we have, denoting $S(t;0,\w,S_0)$ by $S(t;\w,S_0)$,
\[
S(t;\theta_{-t}\omega,S_0)=S_0e^{-\mu t}+\int_{-t}^{0}\Lambda(\theta_s\omega)e^{-\mu s}ds.
\]
For any $K\in\mathcal T([0,+\infty[)$ we have
\begin{equation*}
\lim_{t \to \infty}\sup_{S_0\in K(\theta_{-t}\w)} S_0e^{-\mu t}=0
\end{equation*}
so that we may define
\begin{equation}\label{eq.S}
\lim_{t \to \infty}S(t;\theta_{-t}\omega,S_0)= \int_{-\infty}^{0}\Lambda(\theta_s\omega)e^{-\mu s}ds:=S^*(\omega)
\end{equation}
The equation~\eqref{eq:auxiliary-S(w)} generates a RDS $(\theta,\varphi_S)$, with $ \varphi_S(t,\omega,S_0)=S(t;\omega,S_0)$, which possesses a singleton global random attractor $ \mathcal{A}(\omega)=S^*(\omega). $
Moreover, it follows from~\eqref{Lambda} and ~\eqref{eq.S} that $ S^*(\omega) \in \left[\frac{\Lambda^\ell}{\mu}, \frac{\Lambda^u}{\mu} \right]. $

 \subsubsection{Persistence of susceptible preys}\label{S per sus}
 We give conditions to ensure the prevalence of susceptible preys. We do not discuss conditions for prevalence of infected preys neither predators.
To simplify the following computations, for a given $\delta>0$ we set
\[\xi_\delta=\Lambda^\ell-f\left(\frac{\Theta^u+\delta}{a^u},0,0\right)(\Theta^u+\delta)\,\,\,\textrm{ and }\,\,\,\zeta_\delta = \mu+ \beta \left(\frac{\Theta^u+\delta}{r}\right).
\]

 \begin{proposition}
The global random attractor $\mathcal A$ for the RDS generated by~\eqref{Eq. Principal random}  possesses nontrivial  components on the $\w$-sections: $\mathcal A(\w)=(A_S(\w),A_I(\w),A_P(\w))$ with $A_S(\w)\geq \xi_\delta/\zeta_\delta$, for all $\w\in\W$. In particular, susceptible preys are prevalent if $\xi_\delta>0$ for some $\delta>0$.
\end{proposition}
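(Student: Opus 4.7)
The plan is to combine the invariance of the global attractor with Proposition~\ref{PropB}: for every $\delta>0$ the deterministic compact set $\mathcal K_\delta$ is absorbing and, by Proposition~\ref{pr:inv region}, positively invariant under the cocycle, so the representation formula in Proposition~\ref{PropA} forces $\mathcal A(\omega)\subseteq\mathcal K_\delta$ for every $\omega\in\Omega$. In particular, any point $(S_*,I_*,P_*)$ of the attractor obeys $S_*\leq(\Theta^u+\delta)/a^u$, $I_*\leq(\Theta^u+\delta)/r$ and $P_*\leq\Theta^u+\delta$.

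The next step is to fix such a point and use the invariance $\mathcal A(\omega)=\varphi(t,\theta_{-t}\omega,\mathcal A(\theta_{-t}\omega))$ to write $(S_*,I_*,P_*)=\varphi(t,\theta_{-t}\omega,u_0)$ for some $u_0=(S_0,I_0,P_0)\in\mathcal A(\theta_{-t}\omega)\subseteq\mathcal K_\delta$. By positive invariance the whole pullback orbit $\tau\mapsto(\tilde S(\tau),\tilde I(\tau),\tilde P(\tau))$ on $[0,t]$ stays in $\mathcal K_\delta$, and the monotonicity in~\ref{cond:fg} ($f$ nondecreasing in $S$, nonincreasing in $I,P$) gives
\begin{equation*}
f(\tilde S(\tau),\tilde I(\tau),\tilde P(\tau))\leq f(\tilde S(\tau),0,0)\leq f\!\left(\frac{\Theta^u+\delta}{a^u},0,0\right)
\end{equation*}
throughout the orbit. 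Substituting this bound, together with $\Lambda(\theta_\tau\omega)\geq\Lambda^\ell$, $\tilde P(\tau)\leq\Theta^u+\delta$ and $\tilde I(\tau)\leq(\Theta^u+\delta)/r$, into the $S$-equation of~\eqref{Eq. Principal random} produces the scalar linear differential inequality $\tilde S'(\tau)\geq \xi_\delta-\zeta_\delta\,\tilde S(\tau)$.

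An integrating-factor computation then gives $\tilde S(t)\geq S_0\,e^{-\zeta_\delta t}+(\xi_\delta/\zeta_\delta)(1-e^{-\zeta_\delta t})$, and since $S_0$ lies in the bounded interval $[0,(\Theta^u+\delta)/a^u]$ while $\zeta_\delta>0$, letting $t\to\infty$ eliminates the exponential term and yields $S_*\geq\xi_\delta/\zeta_\delta$. As $(S_*,I_*,P_*)$ was arbitrary in $\mathcal A(\omega)$, this proves $A_S(\omega)\geq\xi_\delta/\zeta_\delta$, and the prevalence claim follows as soon as $\xi_\delta>0$ for some $\delta>0$. The main obstacle, in my view, is ensuring that the trajectory-dependent term $f(\tilde S,\tilde I,\tilde P)\tilde P$ can really be replaced by the uniform constant $f((\Theta^u+\delta)/a^u,0,0)(\Theta^u+\delta)$: this is precisely where the full set of monotonicity conditions in~\ref{cond:fg} is used, and where invariance of $\mathcal K_\delta$ (not just absorption) is crucial to keep $(\tilde I,\tilde P)$ in the required range throughout the pullback window $[0,t]$.
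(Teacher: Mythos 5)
Your proof is correct and follows essentially the same route as the paper: confine the pullback trajectory to the region where $a^uS+rI+P\leq\Theta^u+\delta$, use the monotonicity in~\ref{cond:fg} to obtain the scalar inequality $S'\geq\xi_\delta-\zeta_\delta S$, integrate, and let $t\to\infty$. The only (harmless) difference is that you phrase the endgame via invariance of $\mathcal A$ and the inclusion $\mathcal A(\omega)\subseteq\mathcal K_\delta$, whereas the paper works with pullback absorption of arbitrary tempered sets; your version even sidesteps the sign of the exponential correction term more cleanly.
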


\begin{proof}
 To simplify, in the following we write
\[
h= h(t;0,\theta_{-t}\w,h_0)=h(t;\theta_{-t}\omega,h_0),
\]
for $h$ = $S$, $I$ and $P$. From Proposition~\ref{PropB}, for any $\delta>0$ and any $ K \in \mathcal{T}(\R_+^3)$ there exists $T_K'(\omega) $ such that, for $t \geq T_K'(\omega)$ and $(S_0,I_0,P_0)\in K(\theta_{-t}\w)$   we have
\[ a^uS+rI+P\leq \Theta^u + \delta.
\]
From \eqref{Eq. Principal random} we therefore have
\[
\begin{split}
S'&\geq\Lambda^\ell-\mu S-f\left(\frac{\Theta^u+\delta}{a^u},0,0\right)(\Theta^u+\delta)-\beta \left(\frac{\Theta^u+\delta}{r}\right)S\\
&=\xi_\delta - \zeta_\delta S.
\end{split}
\]
Thus, for all $t \geq T_K'(\omega)$
\[
\begin{split}
S&\geq \frac{\xi_\delta}{\zeta_\delta} + \left(S(T_K'(\w);\theta_{-t}\w, S_0)-\frac{\xi_\delta}{\zeta_\delta}\right)\e^{-\zeta_\delta (t-T_K'(\omega))}.
\end{split}
\]
Hence for any $\delta>0$ and $K\in\mathcal T(\mathbb{R}_+^3)$ and large $t$, we have for all $(S_0,I_0, P_0)\in K(\theta_{-t}\w)$ 
\begin{equation}\label{eq:Spers} S=S(t;\theta_{-t}\omega,S_0)\geq \frac{\xi_\delta}{\zeta_\delta
}.
\end{equation}
\end{proof}

%

 \subsection{Extinction of predators and infected preys}
We discuss now conditions that lead to the vanish of infectious and predators.
 \begin{proposition}
The global random attractor $\mathcal A$ for the RDS generated by~\eqref{Eq. Principal random}
has singleton components $\mathcal A (\w)=(S^*(\w),0,0)$ for every $\omega \in \Omega$, provided that
\[\frac{\beta \Theta^u}{a^u}<c\quad\text{ and }\quad \gamma f\left(\frac{\Theta^u}{a^u},0, 0\right)<\delta_1.\]
\end{proposition}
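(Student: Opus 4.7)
The plan is to use the invariance of the attractor to back-propagate time arbitrarily far into the past, exploit the strict subcriticality to obtain exponential decay of $I$ and $P$, and then invoke the singleton attractor of~\S\ref{S vital} for the reduced $S$-dynamics. Throughout I work inside the compact deterministic forward-invariant absorbing set $\mathcal K_\delta$ from Proposition~\ref{pr:inv region} / Proposition~\ref{PropB}; in particular $\mathcal A(\w)\subseteq \mathcal K_\delta$ for every $\w$. By continuity of $f$ at $(\Theta^u/a^u,0,0)$, I first fix $\delta>0$ small enough that
\[
\alpha_I := c - \beta(\Theta^u+\delta)/a^u > 0, \qquad \alpha_P := \delta_1 - \gamma f\bigl((\Theta^u+\delta)/a^u,\,0,\,0\bigr) > 0.
\]
Invariance of $\mathcal K_\delta$ forces $S(s)\le(\Theta^u+\delta)/a^u$ along any orbit starting in $\mathcal K_\delta$.

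From the $I$-equation in~\eqref{Eq. Principal random},
\[
I'(s) \le (\beta S(s) - c)I(s) \le -\alpha_I I(s),
\]
hence $I(s)\le I_0\,e^{-\alpha_I s}$. Using the monotonicity listed in~\ref{cond:fg}, $f(S,I,P)\le f((\Theta^u+\delta)/a^u,0,0)$ on $\mathcal K_\delta$, and $g$ is bounded on $\mathcal K_\delta$ by some $G_\delta<\infty$. Substituting into the $P$-equation,
\[
P'(s) \le -\alpha_P P(s) + r\eta G_\delta\, I_0\, e^{-\alpha_I s},
\]
and Gronwall yields $P(s)\to 0$ exponentially as $s\to\infty$. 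The bounds depend on $(S_0,I_0,P_0)$ only through the size of $\mathcal K_\delta$, so they hold uniformly for initial data in $\mathcal K_\delta$.

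Now take any $(S,I,P)\in\mathcal A(\w)$. By the invariance $\varphi(T,\theta_{-T}\w,\mathcal A(\theta_{-T}\w))=\mathcal A(\w)$ and the inclusion $\mathcal A(\theta_{-T}\w)\subseteq\mathcal K_\delta$, for every $T\ge 0$ there exists $u_{-T}\in\mathcal K_\delta$ with $\varphi(T,\theta_{-T}\w,u_{-T})=(S,I,P)$. Letting $T\to\infty$ in the uniform bounds above forces $I=0$ and $P=0$. Since $\{I=P=0\}$ is positively invariant, $\mathcal A(\w)\subseteq\{I=P=0\}$ is an invariant set for the reduced RDS $S'=\Lambda(\theta_t\w)-\mu S$, whose unique global random attractor is the singleton $\{S^*(\w)\}$ from~\S\ref{S vital}. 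Hence $S=S^*(\w)$ and $\mathcal A(\w)=\{(S^*(\w),0,0)\}$. The main technical point is the extinction of $P$: the $P$-equation only produces the gap $\alpha_P$ after one absorbs the cross-term $r\eta g I$ into a forcing that itself decays at rate $\alpha_I$, so both the monotonicity of $f$ in $(I,P)$ and the previously proved $I$-decay must be combined; the reduction to the scalar $S$-dynamics at the end is then immediate from the work of~\S\ref{S vital}.
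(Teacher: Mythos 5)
Your proof is correct, but it takes a genuinely different route from the paper's. The paper works with the single quantity $rI+P$, whose derivative along solutions is exactly $(\beta S-c)rI+(\gamma f(S,I,P)-\delta_1-\delta_2 P)P$ --- the cross-term $r\eta g(S,I,P)I$ cancels identically in this combination --- so once the absorbing set forces both bracketed factors below a negative constant one gets $(rI+P)'\le -\kappa(rI+P)$ in one stroke, with no need for a bound on $g$ or a Gronwall step with forcing. You instead decouple the two components: first pure exponential decay of $I$ from $I'\le(\beta S-c)I$, then treat the $P$-equation as linear-plus-exponentially-decaying-forcing, which costs you the bound $G_\delta$ on $g$ over $\mathcal K_\delta$ and a variation-of-constants estimate, but is more robust --- it would survive a model in which the conversion rate in the $P$-equation differed from the removal rate $r\eta g$ in the $I$-equation, where the paper's cancellation fails. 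The second difference is in how the decay is converted into a statement about $\mathcal A(\w)$: the paper shows $rI+P\to0$ along pullback orbits started from arbitrary tempered sets and reads off the attractor from that, whereas you run the forward estimate on the positively invariant absorbing set $\mathcal K_\delta$ and use the invariance identity $\varphi(T,\theta_{-T}\w,\mathcal A(\theta_{-T}\w))=\mathcal A(\w)$ to back-propagate, which pins down individual points of the attractor directly; this is a cleaner logical route to the singleton conclusion. One step you state rather tersely is the final reduction: from $\mathcal A(\w)\subseteq\{I=P=0\}$ you still need that a compact invariant set of the scalar RDS $S'=\Lambda(\theta_t\w)-\mu S$ must coincide with $\{S^*(\w)\}$; this follows from the contraction $|S_1(t)-S_2(t)|=|S_1(0)-S_2(0)|\e^{-\mu t}$ together with~\eqref{eq.S}, and is at the same level of informality as the corresponding step in the paper, so it is acceptable but worth a sentence.
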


\begin{proof}
The last two equations in system~\eqref{Eq. Principal random} yields to
\begin{equation}\label{eq:rI+Plinha}
\begin{split}
(rI+P)'
& = r \beta SI-rc I+\gamma f(S,I,P)P-\delta_1 P-\delta_2 P^2\\
& = (\beta S-c)rI+(\gamma f(S,I,P)-\delta_1-\delta_2 P)P.
\end{split}
\end{equation}
We will see that our hypothesis imply that both factors 
\[\beta S-c\,\,\text{ and }\,\,\gamma f(S,I,P) - \delta_1 - \delta_2 P\]
 are negative for large $t$.
If $ \frac{\beta \Theta^u}{a^u}<c $ we can choose a $ \delta'>0$ small enough such that taking $\delta=\beta\delta'/a^u$ we have
\[
\frac{\beta \Theta^u}{a^u}+\delta<c.
\]
 From Proposition~\ref{PropB} we have that $ \mathcal{K}_{\delta'}\times\Omega $ is an absorbing set in $\mathcal T(\mathbb{R}_+^3)$, so that for any $ K \in \mathcal{T}(\mathbb{R}_+^3)$ and $\omega \in \Omega$  there exists $ T_K'(\omega) $ such that for $ t \geq T_K'(\omega)$ and $(S_0,I_0,P_0)\in K(\theta_{-t}\w)$  we have
\[
\beta S=\beta S(t;\theta_{-t}\omega,S_0) \leq \frac{\beta \Theta^u}{a^u} +\delta< c,
\]
which implies that
\begin{equation}\label{IP1}
\beta S-c <0, \,\text{ for all } t \ge T_K'(\omega).
\end{equation}
Now, if $ \gamma f\left(\frac{\Theta^u}{a^u},0,0\right)<\delta_1 $, since $f$ is continuous by taking $\delta'>0$ even smaller, if necessary, we also have that
\[
\gamma f\left(\frac{\Theta^u}{a^u}+\delta,0,0 \right)<\delta_1.
\]
Again, since $ \mathcal{K}_{\delta}\times\Omega $ is also  an absorbing set in $\mathcal T(\mathbb{R}_+^3)$, for any $ K \in \mathcal{T}(\mathbb{R}_+^3)$ and $\omega \in \Omega$  there exists $ T_K(\omega)\geq T_K'(\w) $ such that, for $ t \geq T_K(\omega)$ and $(S_0,I_0,P_0)\in K(\theta_{-t}\w)$  we have $S\leq \frac{\Theta^u}{a^u}+\delta$ and, setting $P=P(t;\theta_{-t}\w,P_0)$, by Lemma \ref{l:inv reg} we have $I, P \geq 0$.
By the monotonicity of $f$,
\[
\gamma f(S,I,P) \leq \gamma f\left(\frac{\Theta^u}{a^u}+\delta,0,0 \right)\leq \delta_1,\]
which implies
\begin{equation}\label{IP2}
\gamma f(S,I,P)-\delta_1-\delta_2 P<0
\end{equation}
for all $t \ge T_K(\omega)$.
Setting $I =I(t;\theta_{-t}\w,I_0)$, from~\eqref{eq:rI+Plinha}, \eqref{IP1} and~\eqref{IP2} we have for $t\geq T_K(\w)$
\[
\begin{split}
(rI+P)'
& \leq \max\left\{\frac{\beta \Theta^u}{a^u}-c+\delta,\gamma f \left(\frac{\Theta^u}{a^u}+\delta,0,0 \right)-\delta_1\right\}(rI+P).\end{split}
\]
This implies that for all $K\in\mathcal T(\mathbb{R}_+^3)$,
\[\lim_{t\to+\infty}(rI+P)=\lim_{t\to+\infty}(rI(t;\theta_{-t}\w,I_0)+P(t;\theta_{-t}\w,P_0))=0,\]
 for all $(S_0,I_0, P_0)\in K(\theta_{-t}\w)$. Moreover, from \eqref{eq.S} if $I,P=0$ we have
\[\lim_{t\to+\infty} S(t;\theta_{-t}\omega,S_0) = S^*(\omega).
\]
Thus the global random attractor $\mathcal A$ for the RDS generated by~\eqref{Eq. Principal random} has singleton components sets $\mathcal A(\w)=\{(S^*(\w),0,0)\}$ for every $\omega \in \Omega$.
\end{proof}

\begin{example} To illustrate this result in a model we consider Holling-type I functional responses $f(S,I,P)=S$ and $g(S,I,P)=P$. Our model is in this specific case is
\begin{equation*} \label{Eq. holling type I}
\begin{cases}
S'(t,\omega)=\Lambda(\theta_t\omega)-\mu S(t)-S(t)P(t)-\beta S(t)I(t)\\
I'(t,\omega)=\beta S(t)I(t)-\eta I(t)P(t)-c I(t)\\
P'(t,\omega)=\gamma S(t)P(t)+I(t)P(t)-\delta_1 P(t)-\delta_2 P(t)^2
\end{cases}.
\end{equation*}
Thus we have simultaneous extinction of infected preys and predators, in the sense that the global random attractor has $\w$-sections of type $(S^*(\w),0,0)$, if
\[\frac{\beta \Lambda^u}{c\min\{\mu,\delta_1\}}<1\quad\text{ and }\quad  \frac{\gamma\Lambda^u}{\delta_1\min\{\mu,\delta_1\}}<1.\]
This can also be interpreted in the deterministic setting by considering $\Lambda(\w)=\Lambda_0$ for all $\w$ and some $\Lambda_0>0$.
\end{example}

\section{Random attractors for partial dynamics}\label{section:partial}
In Section~\ref{S vital} we analysed the vital dynamics of susceptible population, in the simultaneously absence of disease and predators, for which we concluded the existence of a singleton random global attractor with sections $
(S^*(\w),0,0)$. We discuss now the existence of random global attractors in other subsystems, namely either in the absence of predators or infectious, respectively. Set $\R_+^2=\{(x,y)\in\R^2\colon x,y\geq 0\}$.

\subsection{The case without predator}
Let us now consider the system~\eqref{Eq. Principal random} when we do not have predators, by making $P=0$.
 This case reduces to
\begin{equation}\label{Eq.SI random}
\begin{cases}
S'(t,\omega)=\Lambda(\theta_t\omega)-\mu S(t)-\beta S(t)I(t)\\
I'(t,\omega)=\beta S(t)I(t)-c I(t)
\end{cases}.
\end{equation}
In this situation we have a (random) epidemiological $SI$ model. This $SI$ model is a slightly different model from de $SI$ model corresponding to the first two equations ($SI$) of the $SIR$ models considered in~\cite{Caraballo-Han-2016,Caraballo-Colicci-2017}. In our work, we obtain global random attractor, prove the persistence of susceptibles and provide conditions for the extinctions of infectives.

Let us consider now $v(t;t_0,\omega,v_0)=(S(t;t_0,\omega,S_0),I(t;t_0,\omega,I_0))$ as a solution of the system~\eqref{Eq.SI random} with initial conditions $S(t_0,\omega)=S_0 $ and $I(t_0,\omega)=I_0$, and $v_0=(S_0,I_0)$. Let us  define $V=S+I$ and $V_0=S_0+I_0$. Similarly to Lemma~\ref{l:inv reg}, we easily conclude that the region $\R_2^+$ is positively invariant for system~\eqref{Eq.SI random}. In the following we provide thresholds for forward invariant subsets of $\R_+^2$.

\begin{proposition}
For each $ 0<\delta\leq\Lambda^\ell/c$ the region
\[\
\mathcal{V}_{\delta}=\left\{(S_0,I_0) \in \R_+^2:\frac{\Lambda^\ell}{c}-\delta\leq V_0 \leq \frac{\Lambda^u}{\mu}+\delta \right\}
\]
is positively invariant for the system~\eqref{Eq.SI random}.
\end{proposition}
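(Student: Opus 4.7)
The plan is to work with the auxiliary variable $V=S+I$, for which the nonlinear $\beta SI$ cross-term cancels upon adding the two equations in~\eqref{Eq.SI random}. Concretely, I would first observe that
\begin{equation*}
V'(t,\omega)=S'(t,\omega)+I'(t,\omega)=\Lambda(\theta_t\omega)-\mu S(t)-cI(t).
\end{equation*}
Since by hypothesis~(H1) we have $\mu\leq c$, the linear combination $\mu S+cI$ is sandwiched between $\mu V$ and $cV$, and combining with the bounds $\Lambda^\ell\leq \Lambda(\theta_t\omega)\leq\Lambda^u$ from~\ref{H3} gives the two-sided differential inequality
\begin{equation*}
\Lambda^\ell - cV\;\leq\;V'\;\leq\;\Lambda^u-\mu V.
\end{equation*}

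Next, I would apply the elementary comparison principle for first-order linear scalar ODEs to both inequalities. The upper estimate yields
\begin{equation*}
V(t)\leq \frac{\Lambda^u}{\mu}+\left(V_0-\frac{\Lambda^u}{\mu}\right)e^{-\mu(t-t_0)},
\end{equation*}
so the assumption $V_0\leq \Lambda^u/\mu+\delta$ immediately propagates forward: $V(t)\leq \Lambda^u/\mu+\delta$ for all $t\geq t_0$. Analogously, the lower estimate yields
\begin{equation*}
V(t)\geq \frac{\Lambda^\ell}{c}+\left(V_0-\frac{\Lambda^\ell}{c}\right)e^{-c(t-t_0)},
\end{equation*}
and a small case analysis (splitting according to whether $V_0\geq \Lambda^\ell/c$ or not) shows that the hypothesis $V_0\geq \Lambda^\ell/c-\delta$ is preserved by the flow, because the exponential factor only shrinks any deficit.

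Finally, I would emphasize why the restriction $\delta\leq \Lambda^\ell/c$ enters: it ensures $\Lambda^\ell/c-\delta\geq 0$ so that the lower-bound constraint is compatible with the standing nonnegativity of $V_0$ (and, together with the invariance of $\R_+^2$ already obtained in the analogue of Lemma~\ref{l:inv reg}, guarantees $\mathcal V_\delta\subseteq \R_+^2$). I do not expect any serious obstacle here; the main thing to be careful with is the sign of $V_0-\Lambda^\ell/c$ in the lower estimate, which is handled by noting that multiplying a nonpositive quantity by $e^{-c(t-t_0)}\in(0,1]$ cannot make it more negative.
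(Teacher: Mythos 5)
Your argument is correct and follows essentially the same route as the paper's proof: pass to $V=S+I$ so the $\beta SI$ terms cancel, use $\mu\le c$ together with the bounds on $\Lambda$ to sandwich $V'$ between $\Lambda^\ell-cV$ and $\Lambda^u-\mu V$, and integrate the resulting linear comparison inequalities to see that $\mathcal V_\delta$ is preserved. Your added remarks on the sign of $V_0-\Lambda^\ell/c$ and on why $\delta\le\Lambda^\ell/c$ is needed only make explicit what the paper leaves implicit.
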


\begin{proof}
Recall that we assume $\mu<c$. Adding the two equations in~\eqref{Eq.SI random} we have
\begin{equation}
\begin{split}
V'(t,\w)&=\Lambda(\theta_t\omega)-\mu S - cI\\
& \leq \Lambda^u-\mu V \label{eq.V}
\end{split}
\end{equation}
and
\begin{equation}
\begin{split}
V'(t,\w)
& =\Lambda(\theta_t\omega)-\mu S - cI\\
&   \geq \Lambda^\ell-c V.\label{eq.V2}
\end{split}
\end{equation}
Writing $V=V(t;0,\theta_{-t}\w,V_0)=V(t;\theta_{-t}\w,V_0)$, this implies
\begin{equation}\label{eq.V1}
\frac{\Lambda^\ell}{c}+\left(V_0-\frac{\Lambda^\ell}{c}\right)e^{-c t} \leq V \leq \frac{\Lambda^u}{\mu}+\left(V_0-\frac{\Lambda^u}{\mu}\right)e^{-\mu t}.
\end{equation}
If $(S_0,I_0)\in\mathcal V_\delta$, the solution remains in this region.

\end{proof}

We notice that from~\eqref{eq.V1} and \eqref{eq.V1} we have
\[
\begin{split}
\lim_{t \to \infty}V \leq \lim_{t \to \infty}\frac{\Lambda^u}{\mu}+(V_0-\frac{\Lambda^u}{\mu})e^{-\mu t}= \frac{\Lambda^u}{\mu}
\end{split}
\]
and
\[
\begin{split}
\lim_{t \to \infty}V \geq \lim_{t \to \infty}\frac{\Lambda^\ell}{c}+(U_0-\frac{\Lambda^\ell}{c})e^{-c t}= \frac{\Lambda^\ell}{c}
\end{split}
\]
From the previous estimates we get easily that the solutions $v$ are bounded. The following result follows straightforward as in the proof of Theorem~\eqref{1A}.

\begin{theorem}\label{SI.A}
For any $\omega \in \Omega$, $t_0 \in \mathbb{R}_0^+$ and any initial condition  $v_0=(S_0,I_0)\in \R_+^2$ the system~\eqref{Eq.SI random} admits a unique bounded solution $v(\cdot)=v(\cdot;t_0,\omega,v_0) \in \mathcal{C}([t_0,+\infty),\R_+^3$, with $v(t_0;t_0,\omega,v_0)=v_0$. Moreover, the solution generates a RDS $(\theta,\phi_{SI})$ defined as
\begin{equation}\label{RDE to RDS_SI}
\phi_{SI}(t,\omega,u_0)=v(t;0,\omega,u_0),\, \text{ for all }\, t \geq 0, v_0 \in \R_+^2 \, \text{and} \,\,\omega \in \Omega.
\end{equation}
\end{theorem}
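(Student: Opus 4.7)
The plan is to mirror the argument used for Theorem~\ref{1A}, specialised to the two-dimensional system~\eqref{Eq.SI random}. I would first rewrite~\eqref{Eq.SI random} in the compact form $v'(t)=G(\theta_t\omega,v)$, where
\[
G(\omega,(S,I))=\bigl(\Lambda(\omega)-\mu S-\beta SI,\;\beta SI-cI\bigr).
\]
Hypothesis~\ref{H3} ensures that $t\mapsto\Lambda(\theta_t\omega)$ is continuous, so for each fixed $\omega\in\Omega$ the map $G_\omega(t,v):=G(\theta_t\omega,v)$ belongs to $\mathcal{C}([t_0,+\infty)\times\R_+^2,\R^2)$ and is locally Lipschitz in $v$ (the nonlinearities are bilinear in $(S,I)$). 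Picard--Lindel\"of then produces a unique local solution $v(\cdot;t_0,\omega,v_0)$ through $v_0\in\R_+^2$ at time $t_0$.

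To extend this local solution to $[t_0,+\infty)$ and confirm positivity I would argue exactly as in Lemma~\ref{l:inv reg}: on the set $\{I=0\}$ one has $I'=0$, while on $\{S=0\}$ one has $S'=\Lambda(\theta_t\omega)\geq\Lambda^\ell>0$, so the vector field never points outward on $\partial\R_+^2$. Boundedness follows from the a~priori estimate~\eqref{eq.V1} on $V=S+I$, which gives $V(t)\leq\max\{V_0,\Lambda^u/\mu\}$ for all $t\geq t_0$. Hence the solution cannot blow up in finite time and extends uniquely to $v\in\mathcal{C}([t_0,+\infty),\R_+^2)$ with the required uniform bound.

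For the random dynamical system structure I would verify measurability and the cocycle relation. Measurability of $\Lambda$ together with continuity of $G$ in $(t,v)$ yields a Carath\'eodory random vector field, and Arnold's Theorem~2.2.2 in~\cite{Arnold} then guarantees that $(t,\omega,v_0)\mapsto v(t;0,\omega,v_0)$ is $(\mathcal{B}(\R_0^+)\otimes\mathcal{F}\otimes\mathcal{B}(\R_+^2),\mathcal{B}(\R_+^2))$-measurable; continuity in $v_0$ is the standard continuous-dependence-on-initial-data statement for ODEs. The cocycle identity
\[
\phi_{SI}(t+s,\omega,v_0)=\phi_{SI}(t,\theta_s\omega,\phi_{SI}(s,\omega,v_0))
\]
follows by uniqueness: both sides solve the same initial value problem with driver $r\mapsto\Lambda(\theta_{r+s}\omega)=\Lambda(\theta_r\theta_s\omega)$ and the common initial datum $\phi_{SI}(s,\omega,v_0)$, using that $\theta$ is a flow on $\Omega$. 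The only step that demands some care---though it is not really an obstacle---is keeping the time-shift in the noise driver correctly tracked when verifying the cocycle property; everything else is a verbatim adaptation of the proof of Theorem~\ref{1A}.
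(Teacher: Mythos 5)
Your argument is correct and follows essentially the same route as the paper, which simply invokes the proof of Theorem~\ref{1A} (local Lipschitz vector field plus the a priori bounds on $V=S+I$ from~\eqref{eq.V1}, then Arnold's Theorem~2.2.2 for the RDS structure). The extra detail you supply on positivity and the cocycle identity is a faithful elaboration of that same argument, not a different approach.
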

In the following we establish the existence of a random global attractor for the partial dynamics with no predators.
\begin{theorem}
The RDS $(\theta,\phi_{SI})$ generated by \eqref{Eq.SI random} possesses a deterministic global random attractor $\mathcal A_{SI}$.
\end{theorem}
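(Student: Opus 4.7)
The approach is to invoke Proposition~\ref{PropA} after exhibiting a compact deterministic absorbing set in $\mathcal{T}(\R_+^2)$, in direct analogy with the proof of Proposition~\ref{PropB} for the full three-dimensional system. Fix any $0<\delta\le \Lambda^\ell/c$; the candidate absorbing set is the region $\mathcal{V}_\delta$ already introduced. It is closed and bounded in $\R_+^2$, hence compact, and being $\w$-independent it belongs to $\mathcal{D}(\R_+^2)\subseteq \mathcal{T}(\R_+^2)$.

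The core step is to verify the pullback absorbing property. Given $K\in\mathcal{T}(\R_+^2)$ and $\w\in\W$, for each $v_0=(S_0,I_0)\in K(\theta_{-t}\w)$ with $V_0=S_0+I_0$, I would integrate the two-sided differential inequality obtained from \eqref{eq.V}--\eqref{eq.V2} along the pulled-back noise $\theta_{-t}\w$, exactly as in \eqref{eq.V1}, to get
\[
\frac{\Lambda^\ell}{c}+\left(V_0-\frac{\Lambda^\ell}{c}\right)e^{-ct}\le V(t;\theta_{-t}\w,V_0)\le \frac{\Lambda^u}{\mu}+\left(V_0-\frac{\Lambda^u}{\mu}\right)e^{-\mu t}.
\]
By temperedness of $K$, both $\sup_{v_0\in K(\theta_{-t}\w)}|V_0|\,e^{-ct}$ and $\sup_{v_0\in K(\theta_{-t}\w)}|V_0|\,e^{-\mu t}$ vanish as $t\to\infty$, so the upper and lower bounds above converge uniformly on $K(\theta_{-t}\w)$ to $\Lambda^u/\mu$ and $\Lambda^\ell/c$ respectively. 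Combined with the invariance of $\R_+^2$ for \eqref{Eq.SI random} (an argument modeled on Lemma~\ref{l:inv reg}), this yields $T_K(\w)>0$ with
\[
\phi_{SI}(t,\theta_{-t}\w,K(\theta_{-t}\w))\subseteq \mathcal{V}_\delta \qquad\text{for all } t\ge T_K(\w).
\]

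With a compact absorbing set in $\mathcal{T}(\R_+^2)\supseteq \mathcal{D}(\R_+^2)$ at hand, Proposition~\ref{PropA} produces a unique (by Remark~\ref{unique}) global random attractor $\mathcal{A}_{SI}$, with the standard intersection--union representation. The qualifier \emph{deterministic} in the statement reflects the fact that the absorbing set, and therefore an $\w$-independent containing region for every $\w$-section of the attractor, can be taken to be $\w$-independent: since $\mathcal{V}_\delta$ is positively invariant we have $\mathcal{A}_{SI}(\w)\subseteq \mathcal{V}_\delta$ for every $\delta>0$ and $\w\in\W$, hence $\mathcal{A}_{SI}(\w)$ sits in the deterministic set $\mathcal{V}_0$. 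The only mildly delicate point is the uniform control of $|V_0|$ on $K(\theta_{-t}\w)$, which follows directly from the definition of temperedness; every other ingredient is a routine two-dimensional replay of Proposition~\ref{PropB}.
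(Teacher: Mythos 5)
Your proposal matches the paper's argument: the paper likewise proves the theorem by showing (Proposition~\ref{PropB1}) that the deterministic region $\mathcal{V}_\delta$ with $0<\delta\le\Lambda^\ell/c$ is a compact absorbing set in $\mathcal{T}(\R_+^2)$, using exactly the two-sided estimate \eqref{eq.V1} together with temperedness of $K$, and then invokes Proposition~\ref{PropA}. Your additional remarks on compactness and on why the attractor is deterministic are consistent with, and slightly more explicit than, the paper's write-up.
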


We will prove that exists a closed random absorbing set $\Gamma \in \mathcal{T}(\mathbb{R}_+^2)$. The result follows then from Proposition \ref{PropA}.

\begin{proposition}\label{PropB1}
There exists a closed random absorbing set $\Gamma \in \mathcal{T}(\mathbb{R}_+^2)$ of the RDS $(\theta,\phi_{SI})$ generated by \eqref{Eq.SI random}. Moreover, for $0<\delta \leq \frac{\Lambda^\ell}{c}$, the sets $\Gamma(\omega)$ can be chosen as the deterministic $\mathcal{V}_{\delta}$ for any $\w\in\W$.
%
\end{proposition}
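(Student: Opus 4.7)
The plan is to mirror the proof of Proposition~\ref{PropB}, adapted to the two-dimensional $(S,I)$ setting. I would fix an arbitrary $A \in \mathcal{T}(\mathbb{R}_+^2)$, an $\omega \in \Omega$, and a $\delta \in (0, \Lambda^\ell/c]$, and aim to produce $T_A(\omega) > 0$ such that
\[
\phi_{SI}(t,\theta_{-t}\omega,A(\theta_{-t}\omega)) \subseteq \mathcal{V}_\delta \quad \text{for all } t \geq T_A(\omega).
\]
Since the previous proposition already establishes positive invariance of $\mathcal{V}_\delta$, this single absorption time suffices: once every trajectory issuing from $A(\theta_{-t}\omega)$ is inside $\mathcal{V}_\delta$ at some time $t$, it remains there for all larger times.

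The core estimate comes from substituting $\omega \mapsto \theta_{-t}\omega$ in the already-derived inequality~\eqref{eq.V1}: for any $v_0=(S_0,I_0) \in A(\theta_{-t}\omega)$ with $V_0 = S_0+I_0$,
\[
\frac{\Lambda^\ell}{c} + \left(V_0 - \frac{\Lambda^\ell}{c}\right)e^{-ct} \le V(t;\theta_{-t}\omega,V_0) \le \frac{\Lambda^u}{\mu} + \left(V_0 - \frac{\Lambda^u}{\mu}\right)e^{-\mu t}.
\]
Because $A$ is tempered, $e^{-\mu t}\sup_{v_0 \in A(\theta_{-t}\omega)}\|v_0\| \to 0$ as $t\to\infty$, and the same with $c$ in place of $\mu$; since $0 \le V_0 \le \sqrt{2}\,\|v_0\|$ on $\mathbb{R}_+^2$, this kills both "memory" terms uniformly over the fiber $A(\theta_{-t}\omega)$. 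Thus for $t$ large enough I obtain the simultaneous bounds
\[
\sup_{v_0 \in A(\theta_{-t}\omega)} V(t;\theta_{-t}\omega,V_0) \le \frac{\Lambda^u}{\mu}+\delta, \qquad \inf_{v_0 \in A(\theta_{-t}\omega)} V(t;\theta_{-t}\omega,V_0) \ge \frac{\Lambda^\ell}{c}-\delta,
\]
so the image $\phi_{SI}(t,\theta_{-t}\omega, A(\theta_{-t}\omega))$ lies inside $\mathcal{V}_\delta$.

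To close the argument I would verify that $\mathcal{V}_\delta$ belongs to $\mathcal{T}(\mathbb{R}_+^2)$: it is closed (defined by non-strict inequalities on the continuous linear functional $V_0 = S_0+I_0$), deterministic (so trivially a measurable random set), and bounded, hence tempered. I do not anticipate any real obstacle; the only point demanding care is transferring the temperedness of $A$, which is phrased via the Euclidean norm, to the linear functional $V_0$, and this is immediate from norm equivalence on $\mathbb{R}^2$ together with the nonnegativity of $V_0$ on $\mathbb{R}_+^2$.
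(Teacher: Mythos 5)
Your proposal is correct and follows essentially the same route as the paper's proof: apply the two-sided bound \eqref{eq.V1} along the pullback $\omega\mapsto\theta_{-t}\omega$, use temperedness of $A$ to eliminate the initial-condition terms uniformly over the fibre, and conclude absorption into $\mathcal{V}_\delta$ (with $\delta\le\Lambda^\ell/c$ ensuring $\mathcal{V}_\delta\subseteq\R_+^2$). Your added remarks on positive invariance, on $\mathcal{V}_\delta$ being a deterministic compact (hence tempered) set, and on passing from the Euclidean norm to the functional $V_0$ are harmless elaborations of points the paper leaves implicit.
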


\begin{proof}
Consider $A \in \mathcal{T}(\mathbb{R}_+^2)$ and $\delta >0$. We want to prove that for each $\omega \in \Omega$ there exists $T_A(\omega)>0$ such that
\[
\phi(t,\theta_{-t}\omega,A(\theta_{-t}\omega)) \subseteq \mathcal{V}_\delta\, \text{ for all }\, t \geq T_A(\omega).
\]
From~\eqref{eq.V1}, we have
\[
\begin{split}
V(t;\theta_{-t}\omega,  V_0) \leq \sup_{v_0 \in A(\theta_{-t}\omega)} \left(V_0-\frac{\Lambda^u}{\mu}\right)e^{-\mu t}+\frac{\Lambda^u}{\mu},
\end{split}
\]
and since $A$ is tempered, we have
\[
\begin{split}
\lim_{t \to \infty} \sup_{v_0 \in A(\theta_{-t}\omega)}\left(V_0-\frac{\Lambda^u}{\mu}\right)e^{-\mu t}=0
\end{split}
\]
and thus
\begin{equation}\label{Ineq.1}
\begin{split}
\lim_{t \to \infty}V(t;\theta_{-t}\omega,  V_0) \leq \frac{\Lambda^u}{\mu}.
\end{split}
\end{equation}
Similarly,
\begin{equation}\label{Ineq.2}
\begin{split}
\lim_{t \to \infty}V(t;\theta_{-t}\omega, V_0) \geq \frac{\Lambda^\ell}{c}.
\end{split}
\end{equation}
Considering the inequalities \eqref{Ineq.1} and \eqref{Ineq.2}, there exists a $T_A(\omega)$ such that for $t \geq T_A(\omega)$, we have $\phi_{SI}(t,\theta_{-t}\omega,V_0) \in \mathcal{V}_{\delta}$ for all $V_0 \in A(\theta_{-t}\omega)$. If $\delta \leq \frac{\Lambda^\ell}{c}$ then $\mathcal V_\delta\subseteq \R_+^2$.
\end{proof}

\begin{proposition}
The global random attractor $\mathcal A_{SI}$ for the RDS $(\theta,\varphi_{SI})$ generated by~\eqref{Eq.SI random}  possesses nontrivial component sets on the $\w$-sections: $\mathcal A_{SI}(\w)=(A_S(\w),A_I(\w))$ with
\[A_S(\w)\geq \frac{\Lambda^\ell}{\mu+\frac{\beta\Lambda^u}{\mu}}.\]
\end{proposition}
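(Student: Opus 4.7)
The plan is to mimic the persistence argument used for the full three-dimensional system, adapted to the $SI$ setting. The key observation is that Proposition~\ref{PropB1} already supplies an asymptotic upper bound on $V=S+I$, which in particular gives an asymptotic upper bound on $I$ alone, and this is exactly what is needed to control the nonlinear term $\beta S I$ in the first equation of~\eqref{Eq.SI random}.

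First, fix $K\in\mathcal{T}(\R_+^2)$ and $\delta>0$ small (say $0<\delta\le\Lambda^\ell/c$). By Proposition~\ref{PropB1}, there exists $T_K(\omega)>0$ such that for all $t\ge T_K(\omega)$ and all $(S_0,I_0)\in K(\theta_{-t}\omega)$, the pullback trajectory satisfies
\[
V(t;\theta_{-t}\omega,V_0)=S(t;\theta_{-t}\omega,S_0)+I(t;\theta_{-t}\omega,I_0)\le \frac{\Lambda^u}{\mu}+\delta.
\]
Since $I\ge 0$ by the positive invariance of $\R_+^2$, this yields in particular $I(t;\theta_{-t}\omega,I_0)\le \Lambda^u/\mu+\delta$ for such $t$.

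Next, I would insert this upper bound into the first equation of~\eqref{Eq.SI random} to obtain the differential inequality
\[
S'\ge \Lambda^\ell-\mu S-\beta\left(\frac{\Lambda^u}{\mu}+\delta\right)S=\Lambda^\ell-\left(\mu+\frac{\beta\Lambda^u}{\mu}+\beta\delta\right)S,
\]
valid for $t\ge T_K(\omega)$. Solving this linear inequality on $[T_K(\omega),t]$ via the standard integrating factor trick gives
\[
S(t;\theta_{-t}\omega,S_0)\ge \frac{\Lambda^\ell}{\mu+\frac{\beta\Lambda^u}{\mu}+\beta\delta}+\left(S(T_K(\omega))-\frac{\Lambda^\ell}{\mu+\frac{\beta\Lambda^u}{\mu}+\beta\delta}\right)\e^{-(\mu+\beta\Lambda^u/\mu+\beta\delta)(t-T_K(\omega))},
\]
and pulling back and passing to the limit $t\to\infty$ shows that every pullback trajectory of $S$ is eventually bounded below by $\Lambda^\ell/(\mu+\beta\Lambda^u/\mu+\beta\delta)$. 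Since $\delta>0$ was arbitrary, letting $\delta\downarrow 0$ yields the stated lower bound $\Lambda^\ell/(\mu+\beta\Lambda^u/\mu)$.

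Finally, to conclude the statement about the attractor itself, I would invoke the pullback attraction of $\mathcal{A}_{SI}$ together with the invariance property: for $\omega\in\Omega$ and any $(a_S(\omega),a_I(\omega))\in\mathcal{A}_{SI}(\omega)$ one can write $a_S(\omega)$ as a limit of pullback trajectories $S(t;\theta_{-t}\omega,S_0)$ with $(S_0,I_0)\in\mathcal{A}_{SI}(\theta_{-t}\omega)\in\mathcal{D}(\R_+^2)\subseteq\mathcal{T}(\R_+^2)$, and the lower bound above then passes to the limit. The main (minor) obstacle is making this last step clean: one has to ensure the pullback trajectories used to represent points of the attractor actually come from a tempered family, but this is immediate because $\mathcal{A}_{SI}$ itself lies in $\mathcal{T}(\R_+^2)$ by compactness and the $\delta\to 0$ passage is uniform in the trajectory. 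Everything else reduces to the same linear comparison argument used in the persistence proof for the full model.
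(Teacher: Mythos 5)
Your proposal is correct and follows essentially the same route as the paper: both use the absorbing set $\mathcal V_\delta$ from Proposition~\ref{PropB1} to bound $I\le S+I\le \Lambda^u/\mu+\delta$ along pullback trajectories, plug this into the $S$-equation to get the linear differential inequality $S'\ge\Lambda^\ell-(\mu+\beta(\Lambda^u/\mu+\delta))S$, and integrate. Your extra care in passing the bound to the attractor sections and in letting $\delta\downarrow 0$ is sound and only makes explicit what the paper leaves implicit.
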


\begin{proof}
 Since for any $\delta>0$ the random set $\mathcal{V}_{\delta}\times\Omega$ is an absorbing set in $\mathcal T(\R_+^2)$, for any $ K \in \mathcal{T}(\R_+^3) $ there exists $ T_K(\omega) $ such that for $ t \geq T_K(\omega)$ and $(S_0,I_0)\in K(\theta_{-t}\w)$  we have
 \[
 S+I=S(t;\theta_{-t}\omega,S_0)+I(t;\theta_{-t}\omega,I_0)\leq\frac{\Lambda^u}{\mu}+\delta.
 \]
From~\eqref{Eq.SI random}~we have
\[
\begin{split}
    S'&\geq\Lambda^\ell-\mu S- \beta \left(\frac{\Lambda^u}{\mu}+\delta\right)S\\
    &=\Lambda^\ell -\left(\mu+\beta \left(\frac{\Lambda^u}{\mu}+\delta\right)\right)S.
\end{split}
\]
Hence for any $\delta>0$ and $K\in\mathcal T(\mathbb{R}_+^2)$, $(S_0,I_0)\in K(\theta_{-t}\w)$ and large $t$ we have
\[S(t;\theta_{-t}\w,S_0)\geq \frac{\Lambda^\ell}{\mu+\beta\left(\frac{\Lambda^u}{\mu}+\delta\right)}.\]
\end{proof}

In the following we give condition for an attractor without infectious component.

\begin{proposition}
The global random attractor $\mathcal A_{SI}$ for the RDS $(\theta,\varphi_{SI})$ generated by~\eqref{Eq.SI random} has singleton components $\mathcal A_{SI} (\w)=(S^*(\w),0)$ for every $\omega \in \Omega$, provided that $\frac{\beta \Lambda^u}{\mu}<c.$
\end{proposition}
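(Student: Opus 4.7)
The strategy parallels the proof in the three-dimensional case. The steps are: (i) use the absorbing set $\mathcal V_\delta$ from Proposition~\ref{PropB1} to trap trajectories in a region where $\beta S - c$ is strictly negative; (ii) deduce exponential decay of the infective component in the pullback sense; (iii) show that with $I \to 0$, the $S$-equation asymptotically reduces to the pure vital dynamics studied in \S\ref{S vital}, whose pullback limit is $S^*(\omega)$; (iv) combine with invariance of $\mathcal A_{SI}$ to collapse the sections to a singleton.

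\textbf{Step (i)–(ii).} Because the inequality $\beta\Lambda^u/\mu<c$ is strict, I can choose $\delta>0$ so small that $\kappa:=c-\beta(\Lambda^u/\mu+\delta)>0$. By Proposition~\ref{PropB1}, the set $\mathcal V_\delta$ is an absorbing set in $\mathcal T(\mathbb R_+^2)$, so for any $K\in\mathcal T(\mathbb R_+^2)$ and $\omega\in\Omega$ there is $T_K(\omega)$ such that for $t\geq T_K(\omega)$ and $(S_0,I_0)\in K(\theta_{-t}\omega)$ the trajectory lies in $\mathcal V_\delta$; in particular $S(t;\theta_{-t}\omega,S_0)\leq\Lambda^u/\mu+\delta$. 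Plugging this into the second equation of~\eqref{Eq.SI random} yields $I'=(\beta S-c)I\leq-\kappa I$ on $[T_K(\omega),\infty)$, hence
\[
0\leq I(t;\theta_{-t}\omega,I_0)\leq I(T_K(\omega);\theta_{-t}\omega,I_0)\,e^{-\kappa(t-T_K(\omega))}\leq \left(\tfrac{\Lambda^u}{\mu}+\delta\right)e^{-\kappa(t-T_K(\omega))},
\]
so $I(t;\theta_{-t}\omega,I_0)\to 0$ uniformly over $v_0\in K(\theta_{-t}\omega)$.

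\textbf{Step (iii).} On $[T_K(\omega),\infty)$ the first equation reads $S'=\Lambda(\theta_t\omega)-\mu S+\varepsilon(t)$ with $|\varepsilon(t)|=\beta S\,I\leq \beta(\Lambda^u/\mu+\delta)^2 e^{-\kappa(t-T_K(\omega))}$. Comparing with the pure susceptible flow $\tilde S$ generated by~\eqref{eq:auxiliary-S(w)} with the same initial datum, the difference $S-\tilde S$ satisfies a linear equation driven by $\varepsilon$, and Gr\"onwall (or an explicit variation-of-constants argument) gives
\[
\left|S(t;\theta_{-t}\omega,S_0)-\tilde S(t;\theta_{-t}\omega,S_0)\right|\leq C\int_{T_K(\omega)}^{t}e^{-\mu(t-s)}e^{-\kappa(s-T_K(\omega))}\ds\longrightarrow 0.
\]
Since~\eqref{eq.S} gives $\tilde S(t;\theta_{-t}\omega,S_0)\to S^*(\omega)$, I conclude $S(t;\theta_{-t}\omega,S_0)\to S^*(\omega)$ pullback.

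\textbf{Step (iv).} Steps (ii)–(iii) imply $\varphi_{SI}(t,\theta_{-t}\omega,v_0)\to(S^*(\omega),0)$ for every $v_0\in K(\theta_{-t}\omega)$ and every $K\in\mathcal T(\mathbb R_+^2)$. By the characterization of the attractor in Proposition~\ref{PropA}, $\mathcal A_{SI}(\omega)\subseteq\{(S^*(\omega),0)\}$; non-emptiness of the attractor forces equality. Alternatively, invariance $\varphi_{SI}(t,\omega,\mathcal A_{SI}(\omega))=\mathcal A_{SI}(\theta_t\omega)$ together with the invariance of the plane $\{I=0\}$ for~\eqref{Eq.SI random} shows that $\mathcal A_{SI}$ restricted to this plane is carried by the pure susceptible RDS, whose unique attractor has sections $\{S^*(\omega)\}$. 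The main technical obstacle is step (iii): the coupling $\beta SI$ must be controlled uniformly in the pullback variable $\theta_{-t}\omega$, and this is exactly what the exponential decay of $I$ and the uniform bound of $S$ on $\mathcal V_\delta$ provide.
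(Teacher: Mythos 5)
Your proposal is correct and follows essentially the same route as the paper: use the absorbing set $\mathcal V_\delta$ to force $\beta S-c\le-\kappa<0$ in the pullback regime, deduce exponential decay of $I$, and conclude that the attractor sections collapse to $(S^*(\w),0)$. Your Step (iii) is in fact more careful than the paper's corresponding step, which passes from ``$I\to 0$'' to ``$S\to S^*(\w)$'' by simply invoking~\eqref{eq.S} for the case $I=0$; your variation-of-constants comparison with the pure susceptible flow makes that transition rigorous.
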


\begin{proof}
From the second equation in~\eqref{Eq.SI random} we have that

\begin{equation}\label{eq.I}
I'(t,\omega)=(\beta S-c)I.
\end{equation}
Consider $ \delta>0 $ small enough such that
\[
\frac{\beta \Lambda^u}{\mu}+\beta\delta<c.
\]
From Proposition~\ref{PropB1}, for any $A\in\mathcal T(\R_+^2)$ there exists $ T_A(\omega) $ such that for all $t\geq T_A(\omega), $ and $ (S_0,I_0) \in A(\theta_{-t}\omega) $ we have
\[
\beta S =\beta S(t;\theta_{-t}\omega,S_0)\leq \frac{\beta\Lambda^u}{\mu}+\beta\delta<c
\]
which implies
\[
\beta S-c\leq\frac{\beta\Lambda^u}{\mu}+\beta\delta-c<0 ,\, \text{ for all }\,  t\geq T_A(\omega).
\]
From \eqref{eq.I} we have
\[
\lim_{t\to+\infty} I(t;\theta_{-t}\omega,I_0) \leq \lim_{t\to+\infty}  I(T_A(\w);\theta_{-t}\omega,I_0) e^{\left(\frac{\beta\Lambda^u}{\mu}-\beta\delta-c\right)(t-T_A(\w))}=0.
\]
Moreover, if $I=0$ we have from~\eqref{eq.S}   that $S(t;\theta_{-t}\omega,S_0)$ converges to $S^*(\omega)$, as $t \to +\infty$, for each $\omega \in \Omega$.
\end{proof}

\subsection*{The case without infectious}

We consider now the case that we have no infected preys in system~\eqref{Eq. Principal random}, by making $I=0$, which becomes
\begin{equation}\label{Eq.SP random}
\begin{cases}
S'(t,\omega)=\Lambda(\theta_t\omega)-\mu S-\bar f(S,P)P\\
P'(t,\omega)=\gamma \bar f(S,P)P-\delta_1 P-\delta_2 P^2
.\end{cases}
\end{equation}
where $\bar f(S,P)=f(S,0,P)$. This models corresponds to a random perturbation of a predator-prey model. We obtain global random attractor, prove the persistence of preys and provide conditions for the extinctions of predators.

Let $w(t;t_0,\omega,w_0)$ be the solution of system~\eqref{Eq.SP random} with initial condition $w_0=(S_0,P_0)$ and let $W=\gamma S+P$ and $W_0=\gamma S_0+P_0$.
Define
\[\hat\Theta^u=\frac{\gamma\Lambda^u}{\min\{\mu,\delta_1\}}
\]
and, for $\delta\geq0$
\[
\hat\Theta_\delta^\ell=\max\left\{0,\frac{\gamma \Lambda^\ell-\delta_2(\hat\Theta^u+\delta)^2}{\max\{\mu,\delta_1\}}\right\}.\]

\begin{proposition}\label{W inv}
For each $ \delta>0 $ the region
\[\
\mathcal{W}_{\delta}=\left\{(S_0,P_0) \in \R_+^2:\hat\Theta_\delta \leq W_0 \leq \hat\Theta^u+\delta \right\}
\]
is positively invariant for the system \eqref{Eq.SP random}.
\end{proposition}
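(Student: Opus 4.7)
The plan is to mirror the proof of Proposition~\ref{pr:inv region}, but now using the single auxiliary variable $W=\gamma S+P$ (in place of both $M$ and $N$), since the SP subsystem has no $I$ compartment with a distinct conversion rate. First I would differentiate $W$ along trajectories of~\eqref{Eq.SP random} and observe that the predation term $\gamma\bar f(S,P)P$ cancels exactly between $\gamma S'$ and $P'$, yielding
\begin{equation*}
W'=\gamma\Lambda(\theta_t\omega)-\gamma\mu S-\delta_1 P-\delta_2 P^2,
\end{equation*}
which is the clean starting point for both bounds. I would also take for granted that $\R_+^2$ is itself forward invariant for~\eqref{Eq.SP random}, exactly as in Lemma~\ref{l:inv reg}: the line $P=0$ is invariant, and on $S=0$ one has $S'=\Lambda(\theta_t\omega)>0$.

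For the upper bound, discarding the non-positive term $-\delta_2P^2$ and using $\Lambda(\theta_t\omega)\leq\Lambda^u$ gives
\begin{equation*}
W'\leq \gamma\Lambda^u-\min\{\mu,\delta_1\}W,
\end{equation*}
and the standard scalar comparison yields $W(t)\leq \hat\Theta^u+(W_0-\hat\Theta^u)\,\e^{-\min\{\mu,\delta_1\}(t-t_0)}$, so $W_0\leq\hat\Theta^u+\delta$ is preserved. Because $\gamma S\geq 0$ this also gives the pointwise estimate $P(t)\leq W(t)\leq\hat\Theta^u+\delta$, which I would feed back into the quadratic term for the lower bound. Replacing $-\delta_2P^2$ by the constant $-\delta_2(\hat\Theta^u+\delta)^2$ and bounding $-\gamma\mu S-\delta_1 P\geq-\max\{\mu,\delta_1\}W$ then gives
\begin{equation*}
W'\geq \gamma\Lambda^\ell-\delta_2(\hat\Theta^u+\delta)^2-\max\{\mu,\delta_1\}W.
\end{equation*}

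When $\gamma\Lambda^\ell-\delta_2(\hat\Theta^u+\delta)^2\geq 0$, comparison gives $W(t)\geq\hat\Theta_\delta^\ell+(W_0-\hat\Theta_\delta^\ell)\,\e^{-\max\{\mu,\delta_1\}(t-t_0)}$, so the lower bound $W_0\geq\hat\Theta_\delta^\ell$ propagates. When the numerator is negative, $\hat\Theta_\delta^\ell=0$ and the bound $W\geq 0$ follows immediately from the invariance of $\R_+^2$. The only mildly delicate point is the $\max\{0,\cdot\}$ in the definition of $\hat\Theta_\delta^\ell$, which forces the case split above; otherwise the argument is a direct transcription of the proof of Proposition~\ref{pr:inv region}, simplified by the absence of the $\beta SI$ cross-term and by having a single effective conversion coefficient $\gamma$.
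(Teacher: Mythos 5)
Your proposal is correct and follows essentially the same route as the paper: the same differential inequalities for $W=\gamma S+P$, the same scalar comparison for the upper bound, and the same feedback of $P\le W\le\hat\Theta^u+\delta$ into the quadratic term for the lower bound. Your explicit handling of the case $\hat\Theta_\delta^\ell=0$ via the invariance of $\R_+^2$ is a small point the paper leaves implicit, but it does not change the argument.
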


\begin{proof}
From~\eqref{Eq.SP random} we have
\begin{equation}
\begin{split}
W'(t,\w)& = \gamma \Lambda(\theta_t\omega)-\gamma \mu S-\delta_1 P-\delta_2P^2\\
& \quad \leq \gamma \Lambda^u-\min\{\mu,\delta_1\}W,\label{eq.W}
\end{split}
\end{equation}
which, writing $W=W(t;0,\theta_{-t}\w,W_0)=W(t;\theta_{-t}\w,W_0)$,  implies
\begin{equation}\label{eq.WW}
W\leq \hat\Theta^u+\left(W_0-\hat\Theta^u\right)e^{-\min\{\mu,\delta_1\}t}.
\end{equation}
We also have
\begin{equation}
\begin{split}
W'(t,\w)
& \geq \gamma \Lambda^\ell-\delta_2P^2-\max\{\mu,\delta_1\}W.\label{eq.W1}
\end{split}
\end{equation}
Notice that if the initial condition $(S_0,P_0)$ belongs to $\mathcal W_\delta$ then    $P\leq  \hat\Theta^u+\delta$, and in this situation we have
\begin{equation}
\begin{split}
W'(t,\w)
& \geq \gamma \Lambda^\ell-\delta_2(\hat\Theta^u+\delta)^2-\max\{\mu,\delta_1\}W .\label{eq.W1b}
\end{split}
\end{equation}
Hence, in this case we have
\begin{equation}\label{eqWW1}
W \geq \hat\Theta_\delta^\ell+\left(W_0-\hat\Theta_\delta^\ell\right)e^{-\max\{\mu,\delta_1\}t}.
\end{equation}
If $(S_0,P_0) \in \mathcal{W}_{\delta},$ the solution remains in that region. \end{proof}

From~\eqref{eq.WW}  we have
\[
\lim_{t \to \infty}W\leq \hat\Theta^u.
\]
Notice that for any $\delta>0$, for large $t$ we have $P\leq W<\hat\Theta^u+\delta$, and then~\eqref{eqWW1} implies
\[
\lim_{t \to \infty}W\geq \hat\Theta_\delta^\ell.
\]
This implies that the solutions $w$ are bounded.

\begin{proposition}\label{1A.P}
For any $\omega \in \Omega$, $t_0 \in \mathbb{R}_0^+$ and any initial condition  $w_0=(S_0,P_0)\in (\R_0^+)^2$ the system~\eqref{Eq.SP random} admits a unique bounded solution $w(\cdot)=w(\cdot;t_0,\omega,w_0) \in \mathcal{C}([t_0,+\infty),(\R_0^+)^2)$ with $w(t_0;t_0,\omega,w_0)=w_0$.
Moreover, the solution generates a random dynamical system $(\theta,\phi_{SP})$ defined as
\[
\phi_{SP}(t,\omega,v_0)=w(t;0,\omega,w_0),\, \text{ for all }\, t \geq 0, w_0 \in (\R_0^+)^2 \, \text{and} \,\,\omega \in \Omega.
\]

\end{proposition}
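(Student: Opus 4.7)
The plan is to follow the same template used in the proof of Theorem~\ref{1A}, adapted to the two-dimensional subsystem~\eqref{Eq.SP random}. I would first rewrite the system in the compact form
\begin{equation*}
w'(t)=\bar F(\theta_t\omega,w),\qquad w(t_0)=w_0,
\end{equation*}
with $\bar F(\omega,(S,P))=(\Lambda(\omega)-\mu S-\bar f(S,P)P,\;\gamma\bar f(S,P)P-\delta_1 P-\delta_2 P^2)$. Since $\bar f(S,P)=f(S,0,P)$, the regularity hypotheses in~\ref{cond:fg} imply that $\bar f$ is locally Lipschitz in $(S,P)$; combined with~\ref{H3}, the map $(t,w)\mapsto \bar F(\theta_t\omega,w)$ is continuous in $t$ and locally Lipschitz in $w$, so that for each fixed $\omega$ the Picard--Lindel\"of theorem yields a unique maximal solution.

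Next I would upgrade this to a global solution on $[t_0,+\infty)$ by appealing to the a priori bounds already derived: Proposition~\ref{W inv} together with estimates~\eqref{eq.WW} and~\eqref{eqWW1} show that $W=\gamma S+P$ remains bounded for any nonnegative initial condition (take $\delta$ large enough so that $W_0\le \hat\Theta^u+\delta$; then $W$ stays in $[0,\hat\Theta^u+\delta]$ for all $t\ge t_0$, and in particular $S,P$ do not blow up). To close the positivity statement I would argue exactly as in Lemma~\ref{l:inv reg}: on the axis $P=0$ one has $P'=0$, so $\{P=0\}$ is invariant; on $S=0$ one computes $S'=\Lambda(\theta_t\omega)-\bar f(0,P)P=\Lambda(\theta_t\omega)>0$ using $\bar f(0,P)=f(0,0,P)=0$ from~\ref{cond:fg}(iii); hence the vector field never points outward on the boundary of $(\R_0^+)^2$, establishing positive invariance.

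For the random dynamical system structure I would then invoke \cite[Theorem~2.2.2]{Arnold}: the driving function $\bar F(\theta_t\omega,w)$ is jointly measurable in $(t,\omega,w)$ because $\omega\mapsto\Lambda(\omega)$ is measurable, $t\mapsto\Lambda(\theta_t\omega)$ is continuous, and $\bar f$ is continuous. Therefore the solution map $(t,\omega,w_0)\mapsto w(t;0,\omega,w_0)$ is $(\mathcal B([0,\infty))\otimes\mathcal F\otimes\mathcal B((\R_0^+)^2),\mathcal B((\R_0^+)^2))$-measurable, continuous in $w_0$, and satisfies $\phi_{SP}(0,\omega,w_0)=w_0$. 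The cocycle identity
\begin{equation*}
\phi_{SP}(t+s,\omega,w_0)=\phi_{SP}(t,\theta_s\omega,\phi_{SP}(s,\omega,w_0))
\end{equation*}
follows from the uniqueness of solutions together with the identity $w(t+t_0;t_0,\omega,w_0)=w(t;0,\theta_{t_0}\omega,w_0)$ already exploited in Theorem~\ref{1A}.

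There is no substantial obstacle here: every ingredient (local Lipschitz continuity, a priori $W$-bounds, positive invariance of the nonnegative orthant, joint measurability, Arnold's theorem) is already in place from the preceding sections, and the proof is a direct two-dimensional specialization of the three-dimensional argument. The only minor point requiring attention is checking that $\bar f(0,P)=0$ so that the $S=0$ boundary is crossed outwards; this is immediate from hypothesis~\ref{cond:fg}(iii).
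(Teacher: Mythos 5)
Your argument is correct and is essentially the proof the paper intends: Proposition~\ref{1A.P} is stated without an explicit proof, relying on the boundedness of $W=\gamma S+P$ established immediately before it and on the template of Theorem~\ref{1A}, which is exactly what you carry out (local Lipschitz continuity of $\bar F$ from~\ref{cond:fg} and~\ref{H3}, the a priori $W$-bounds, positive invariance of the nonnegative orthant as in Lemma~\ref{l:inv reg}, and Arnold's theorem for the measurability and cocycle structure). The only blemish is the final sentence, where you say the $S=0$ boundary is ``crossed outwards''; you clearly mean the vector field points inwards there, consistent with what you correctly argued in the preceding paragraph.
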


\begin{theorem}
The RDS $(\theta,\phi_{SP})$ generated by \eqref{Eq.SP random} possesses a global random attractor $\mathcal A_{SP}$.
\end{theorem}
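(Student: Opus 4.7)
The plan is to mirror the arguments used for Proposition~\ref{PropB} and Proposition~\ref{PropB1}: invoke Proposition~\ref{PropA} once we exhibit a compact (deterministic) absorbing set in $\mathcal{T}(\R_+^2)$. Since $\mathcal{T}(\R_+^2)\supseteq \mathcal{D}(\R_+^2)$, this will yield existence of the global random attractor $\mathcal{A}_{SP}$, and uniqueness will follow from Remark~\ref{unique}.

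First I would state and prove a proposition analogous to Proposition~\ref{PropB1}: for any $\delta>0$, the deterministic set $\mathcal{W}_\delta$ (which is bounded, closed, and positively invariant by Proposition~\ref{W inv}) is a compact absorbing set in $\mathcal{T}(\R_+^2)$. Given $A\in\mathcal{T}(\R_+^2)$ and $\w\in\Omega$, I need to produce $T_A(\w)>0$ such that $\phi_{SP}(t,\theta_{-t}\w,A(\theta_{-t}\w))\subseteq \mathcal{W}_\delta$ for all $t\geq T_A(\w)$. Using estimate~\eqref{eq.WW} pulled back (replacing $\w$ by $\theta_{-t}\w$) together with the temperedness of $A$, I would obtain
\[
\limsup_{t\to\infty}\sup_{w_0\in A(\theta_{-t}\w)} W(t;\theta_{-t}\w,W_0) \leq \hat\Theta^u,
\]
since the factor $e^{-\min\{\mu,\delta_1\}t}$ beats any subexponential growth of $\sup_{w_0\in A(\theta_{-t}\w)}|W_0|$. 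Hence for any $\delta'\in(0,\delta)$ there is $T_A^{(1)}(\w)$ with $W\leq \hat\Theta^u+\delta'$ for $t\geq T_A^{(1)}(\w)$; in particular $P\leq \hat\Theta^u+\delta'$ from that time on.

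The mildly delicate point — the analogue of the two-step argument in Proposition~\ref{PropB} — is the lower bound. The estimate~\eqref{eqWW1} was derived under the standing assumption that $P\leq \hat\Theta^u+\delta$ throughout the time interval, so I cannot apply it immediately to trajectories starting from $A(\theta_{-t}\w)$. The fix is to first wait until time $T_A^{(1)}(\w)$ so that $P\leq \hat\Theta^u+\delta'$, restart the trajectory from that time inside the strip $\{P\leq \hat\Theta^u+\delta'\}$ (which is forward invariant by the same computation leading to \eqref{eq.WW}), and then apply~\eqref{eqWW1} with the parameter $\delta'$ in place of $\delta$ on the remaining time interval. This yields
\[
\liminf_{t\to\infty}\inf_{w_0\in A(\theta_{-t}\w)} W(t;\theta_{-t}\w,W_0)\geq \hat\Theta_{\delta'}^\ell > \hat\Theta_\delta^\ell,
\]
where the strict inequality uses monotonicity of $\delta\mapsto \hat\Theta_\delta^\ell$ (and is trivially satisfied if $\hat\Theta_\delta^\ell=0$, in which case the lower bound is furnished by the positive invariance of $\R_+^2$ analogous to Lemma~\ref{l:inv reg}). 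Taking $T_A(\w)$ larger than the times produced by the two limits gives the required absorption into $\mathcal{W}_\delta$.

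Finally, since $\mathcal{W}_\delta$ is a bounded closed subset of $\R_+^2$, it is compact and deterministic, hence belongs to $\mathcal{D}(\R_+^2)\subseteq \mathcal{T}(\R_+^2)$. Proposition~\ref{PropA} then delivers a unique global random attractor $\mathcal{A}_{SP}$ for $(\theta,\phi_{SP})$, with fiber formula given by the pullback of $\mathcal{W}_\delta$ (in fact by the simpler positively-invariant formula in Proposition~\ref{PropA}). The main obstacle in the write-up is just the bookkeeping of the two-stage absorption described above, which is entirely parallel to what was done for the full three-dimensional model.
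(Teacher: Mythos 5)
Your proposal is correct and follows essentially the same route as the paper: exhibit the deterministic, positively invariant set $\mathcal{W}_\delta$ as a compact absorbing set in $\mathcal{T}(\R_+^2)$ (Proposition~\ref{PropB2}, via the pullback estimates~\eqref{eq.WW} and~\eqref{eqWW1} together with temperedness), and then invoke Proposition~\ref{PropA}. Your explicit two-stage handling of the lower bound (absorbing into the strip $\{P\le\hat\Theta^u+\delta'\}$ before applying~\eqref{eqWW1}) is exactly what the paper's terser ``for any $0<\delta'<\delta$ and large $t$'' step is doing.
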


As before, the proof follows from Proposition \ref{PropA} and from the fact that there exists a closed random absorbing set $\Gamma\in \mathcal{T}(\mathbb{R}_+^2)$ given by Proposition~\ref{PropB2} below.

\begin{proposition}\label{PropB2}
There exists a closed random absorbing set $\Gamma \in \mathcal{T}(\mathbb{R}_+^2)$ of the RDS $(\theta,\phi_{SP})$ generated by~\eqref{Eq.SP random}. Moreover, for any $\delta>0$ the sets $\Gamma(\omega)$ can be chosen as the deterministic $\mathcal W_\delta$ for any $\w\in\W$.
%

\end{proposition}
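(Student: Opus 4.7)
The plan is to follow verbatim the template of Propositions~\ref{PropB} and~\ref{PropB1}: declare $\Gamma(\omega):=\mathcal W_\delta$ for a fixed but arbitrary $\delta>0$. Since $\mathcal W_\delta$ is a deterministic, closed, bounded subset of $\mathbb R_+^2$, it lies in $\mathcal D(\mathbb R_+^2)\subseteq\mathcal T(\mathbb R_+^2)$ automatically, so only the pullback-absorbing property has to be checked. Proposition~\ref{W inv} already gives the forward invariance $\phi_{SP}(t,\omega,\mathcal W_\delta)\subseteq\mathcal W_\delta$, so once a trajectory enters $\mathcal W_\delta$ it remains there; it therefore suffices to show that for every $A\in\mathcal T(\mathbb R_+^2)$ and $\mathbb P$-a.e.\ $\omega$ there exists $T_A(\omega)>0$ such that $\phi_{SP}(t,\theta_{-t}\omega,A(\theta_{-t}\omega))\subseteq\mathcal W_\delta$ for every $t\geq T_A(\omega)$.

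First I would fix $0<\delta'<\delta$ and handle the upper constraint on $W=\gamma S+P$. Applying the estimate~\eqref{eq.WW} with $\omega$ replaced by $\theta_{-t}\omega$ and taking supremum over $w_0=(S_0,P_0)\in A(\theta_{-t}\omega)$, together with the trivial bound $W_0\leq\max\{\gamma,1\}\,\|w_0\|$ and the temperedness of $A$, the transient $\sup_{w_0\in A(\theta_{-t}\omega)}(W_0-\hat\Theta^u)e^{-\min\{\mu,\delta_1\}t}$ vanishes as $t\to\infty$. Hence there is $T_1(\omega)>0$ such that $W(t;\theta_{-t}\omega,W_0)\leq\hat\Theta^u+\delta'$, and in particular $P\leq W\leq\hat\Theta^u+\delta'$, for every $t\geq T_1(\omega)$ uniformly in $w_0\in A(\theta_{-t}\omega)$.

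For the lower constraint, if $\hat\Theta_\delta^\ell=0$ nothing is needed beyond the nonnegativity guaranteed by Proposition~\ref{1A.P}. Otherwise I would restart at $t_0=T_1(\omega)$: since $P\leq\hat\Theta^u+\delta'$ throughout $[T_1(\omega),\infty)$, the derivation leading to~\eqref{eqWW1} applies with $\delta'$ in place of $\delta$, yielding
\[
W(t;\theta_{-t}\omega,W_0)\geq\hat\Theta^\ell_{\delta'}+\bigl(W(T_1(\omega);\theta_{-t}\omega,W_0)-\hat\Theta^\ell_{\delta'}\bigr)e^{-\max\{\mu,\delta_1\}(t-T_1(\omega))}.
\]
Temperedness of $A$ again makes the exponential transient vanish uniformly in $w_0\in A(\theta_{-t}\omega)$, so for $t$ larger than some $T_A(\omega)\geq T_1(\omega)$ one has $W\geq\hat\Theta^\ell_{\delta'}>\hat\Theta^\ell_\delta$, and combining with the upper bound gives the claimed inclusion in $\mathcal W_\delta$.

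The main obstacle is purely bookkeeping: the lower estimate~\eqref{eqWW1} is conditional on $P\leq\hat\Theta^u+\delta'$, which is available only on the pullback interval $[T_1(\omega),\infty)$, so the integration producing the lower bound must be restarted from $T_1(\omega)$ rather than from $0$, and the resulting initial datum $W(T_1(\omega);\theta_{-t}\omega,W_0)$ must be absorbed through the temperedness of $A$, exactly as in the parallel step of the proof of Proposition~\ref{PropB}. The strict gap $\hat\Theta^\ell_{\delta'}>\hat\Theta^\ell_\delta$ (monotonicity of $\delta\mapsto\hat\Theta^\ell_\delta$) is what lets this transient argument land inside $\mathcal W_\delta$ rather than merely on its boundary.
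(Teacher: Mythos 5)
Your proof is correct and follows essentially the same route as the paper: take $\Gamma(\omega)=\mathcal W_\delta$, use the differential inequalities for $W=\gamma S+P$ together with temperedness of $A$ to kill the pullback transients, and exploit $\hat\Theta^\ell_{\delta'}>\hat\Theta^\ell_\delta$ for $0<\delta'<\delta$ so as to land strictly inside $\mathcal W_\delta$. The paper is no more explicit than you are about the intermediate-time bound on $P$ needed to activate the lower estimate~\eqref{eqWW1}, so your restart-at-$T_1(\omega)$ bookkeeping matches (and if anything slightly sharpens) its level of detail.
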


\begin{proof}
Consider $A \in \mathcal{T}(\mathbb{R}_+^2)$. We will prove that for any $\omega \in \Omega$ there exists $T_A(\omega)>0$ such that
\[
\phi_{SP}(t,\theta_{-t}\omega,A(\theta_{-t}\omega) \subseteq \mathcal W_\delta \text{ for all }\, t \geq T_A(\omega).
\]
From Proposition~\ref{W inv} the region $\mathcal W_\delta$ is positively invariant for $\delta>0$.
From~\eqref{eq.WW} we have for $w_0=(S_0,P_0)\in A(\theta_{-t}\w)$ that
\[
\begin{split}
W(t;\theta_{-t}\omega,  W_0) \leq \hat\Theta^u + \sup_{w_0 \in A(\theta_{-t}\omega)} \left(W_0-\hat\Theta^u\right)e^{-\min\{\mu,\delta_1\}t}.
\end{split}
\]
Since $\mathcal A$ is tempered, we have
\[
\begin{split}
\lim_{t \to \infty} \sup_{w_0 \in A(\theta_{-t}\omega)} \left(W_0-\hat\Theta^u\right)e^{-\min\{\mu,\delta_1\}t}=0
\end{split}
\]
and thus
\begin{equation}\label{Ineq.1a}
\begin{split}
\lim_{t \to \infty}W(t;\theta_{-t}\omega,  W_0) \leq \Theta^u.
\end{split}
\end{equation}
Similarly, for any $0<\delta'<\delta$ and large $t$ we have
\[
\begin{split}
W(t;\theta_{-t}\omega,  W_0)  \geq \hat\Theta_{\delta'}^\ell+\inf_{w_0 \in A(\theta_{-t}\omega)} \left(W_0-\hat\Theta_{\delta'}^\ell\right)e^{-\max\{\mu,\delta_1\}t}.
\end{split}
\]
Since
\[
\begin{split}
\lim_{t \to \infty} \inf_{w_0 \in A(\theta_{-t}\omega)} \left(W_0-\hat\Theta_{\delta'}^\ell\right)e^{-\max\{\mu,\delta_1\}t}=0
\end{split}
\]
we have
\begin{equation}\label{Ineq.2a}
\begin{split}
\lim_{t \to \infty}W(t;\theta_{-t}\omega, W_0) \geq \Theta_{\delta'}^\ell,
\end{split}
\end{equation}
and the result follows.
\end{proof}

\begin{proposition}
The global random attractor $\mathcal A_{SP}$ for the RDS generated by~\eqref{Eq.SP random}  possesses  component sets on the $\w$-sections  $\mathcal A_{SP}(\w)=(A_S(\w),A_P(\w))$ with
\[\mu A_S(\w)\geq {\Lambda^\ell-\hat\Theta^u\bar f\left({\hat\Theta^u}/\gamma,0\right)}.\]
In particular, susceptible preys are prevalent provided the right side of inequality is positive.

\end{proposition}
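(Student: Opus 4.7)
The plan is to mimic the argument used for the \emph{persistence of susceptible preys} proposition in Section~\ref{S per sus}, adapted to the predator-prey subsystem~\eqref{Eq.SP random}. That is, I want to use the absorbing property of $\mathcal W_\delta$ from Proposition~\ref{PropB2} to bound the loss term $\bar f(S,P)P$ in the first equation of~\eqref{Eq.SP random} in a way that reduces the $S$-dynamics to a linear comparison ODE with a constant lower bound.

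First I would fix $K \in \mathcal T(\R_+^2)$ and $\delta>0$, write $S=S(t;\theta_{-t}\w,S_0)$ and $P=P(t;\theta_{-t}\w,P_0)$, and invoke Proposition~\ref{PropB2}: there is $T_K(\w)$ so that for all $t\geq T_K(\w)$ and $(S_0,P_0)\in K(\theta_{-t}\w)$ the inequalities
\[
\gamma S + P = W(t;\theta_{-t}\w,W_0) \leq \hat\Theta^u + \delta
\]
hold, which give simultaneously $P\leq \hat\Theta^u+\delta$ and $S\leq (\hat\Theta^u+\delta)/\gamma$. Next I would use the monotonicity properties in~\ref{cond:fg} applied to $\bar f(S,P)=f(S,0,P)$ (nondecreasing in $S$, nonincreasing in $P$) together with $P\geq 0$ to estimate
\[
\bar f(S,P)\,P \leq \bar f\!\left(\tfrac{\hat\Theta^u+\delta}{\gamma},0\right)(\hat\Theta^u+\delta).
\]
Inserting this and $\Lambda(\theta_t\w)\geq \Lambda^\ell$ into the first equation of~\eqref{Eq.SP random} yields the lower bound
\[
S' \geq \Lambda^\ell - (\hat\Theta^u+\delta)\,\bar f\!\left(\tfrac{\hat\Theta^u+\delta}{\gamma},0\right) - \mu S
\ =: \ \tilde\xi_\delta - \mu S.
\]

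From Grönwall/explicit integration on $[T_K(\w),t]$ this gives
\[
S(t;\theta_{-t}\w,S_0) \geq \tfrac{\tilde\xi_\delta}{\mu} + \Big(S(T_K(\w);\theta_{-t}\w,S_0) - \tfrac{\tilde\xi_\delta}{\mu}\Big)\e^{-\mu(t-T_K(\w))},
\]
so taking $t\to\infty$ (using that the orbit stays in the bounded absorbing set so the bracketed initial value is uniformly bounded) I obtain $\liminf_{t\to\infty} S(t;\theta_{-t}\w,S_0) \geq \tilde\xi_\delta/\mu$ for every $(S_0,P_0)\in K(\theta_{-t}\w)$. Finally, sending $\delta\downarrow 0$ and using continuity of $\bar f$ to pass to the limit $\bar f((\hat\Theta^u+\delta)/\gamma,0)\to \bar f(\hat\Theta^u/\gamma,0)$, together with the attracting property of $\mathcal A_{SP}$, transfers this lower bound to the $S$-component of every point of $\mathcal A_{SP}(\w)$, producing
\[
\mu A_S(\w) \geq \Lambda^\ell - \hat\Theta^u \,\bar f\!\left(\tfrac{\hat\Theta^u}{\gamma},0\right),
\]
with prevalence of prey whenever the right-hand side is positive.

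The only subtle point, and where care is needed, is the transfer from an $\liminf$-type lower bound on pullback trajectories to a pointwise lower bound on attractor sections: since $\mathcal A_{SP}(\w)\subseteq \overline{\bigcup_{t\geq \tau}\varphi_{SP}(t,\theta_{-t}\w,\Gamma(\theta_{-t}\w))}$ by Proposition~\ref{PropA}, any $(A_S(\w),A_P(\w))\in\mathcal A_{SP}(\w)$ is a limit of images of orbits starting in the absorbing set $\Gamma\in\mathcal T(\R_+^2)$, and applying the uniform-in-$(S_0,P_0)$ estimate above to this approximating sequence (and then $\delta\downarrow 0$) yields the desired bound on $A_S(\w)$.
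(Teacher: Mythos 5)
Your proposal is correct and follows essentially the same route as the paper: invoke the absorbing property of $\mathcal W_\delta$ from Proposition~\ref{PropB2} to bound $S$ and $P$ from above, use the monotonicity of $\bar f$ to control the predation loss term, reduce to the linear comparison inequality $S'\geq \Lambda^\ell-(\hat\Theta^u+\delta)\bar f\bigl((\hat\Theta^u+\delta)/\gamma,0\bigr)-\mu S$, and pass to the limit. The only difference is that you spell out the Gr\"onwall integration, the limit $\delta\downarrow 0$, and the transfer of the pullback lower bound to the attractor sections, steps the paper compresses into ``and the result follows.''
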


\begin{proof}
 Since for any $\delta>0$ the random set $\mathcal{W}_{\delta}\times\Omega$ is an absorbing set in $\mathcal T(\R_+^2)$, for any $ K \in \mathcal{T}(\R_+^3) $ there exists $ T_K(\omega) $ such that for $ t \geq T_K(\omega)$ and $(S_0,P_0)\in K(\theta_{-t}\w)$  we have
 \[
 \gamma S+P=\gamma S(t;\theta_{-t}\omega,S_0)+P(t;\theta_{-t}\omega,P_0)\leq\hat\Theta^u+\delta.
 \]
From~\eqref{Eq.SP random} we then have for any $\delta>0$
\[
\begin{split}
    S'&>\Lambda^\ell-\mu S-\bar f\left({\hat\Theta^u}/\gamma+\delta,0\right)(\hat\Theta^u+\delta).\\
\end{split}
\]
Hence for any $\delta>0$ and $K\in\mathcal T(\mathbb{R}_+^2)$, $(S_0,I_0)\in K(\theta_{-t}\w)$ and large $t$ we have
\[S(t;\theta_{-t}\w,S_0)> \frac{\Lambda^\ell-\bar f\left({\hat\Theta^u}/\gamma+\delta,0\right)(\hat\Theta^u+\delta)}{\mu}\]
and the result follows.
\end{proof}

We give now  a condition leading to the extinction of predators.

\begin{proposition}
The global random attractor $\mathcal{A}_{SP}$ for the RDS $(\theta,\phi_{SP})$ generated by~\eqref{Eq.SP random} has a singleton components $\mathcal{A_{SP}}=(S^*(\omega),0)$  for every $\omega \in \Omega$, provided that
\[
\gamma\bar f\left(\hat\Theta^u/\gamma,0\right)<\delta_1.
\]
\end{proposition}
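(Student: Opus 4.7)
The plan is to mimic the structure of the analogous extinction result for the full model, working now with the $SP$-subsystem. The key factorization is
\[
P'=\bigl(\gamma\bar f(S,P)-\delta_1-\delta_2 P\bigr)P,
\]
so it suffices to show that the bracketed factor becomes strictly negative and uniformly bounded away from $0$ for large (pullback) time, and then to recover $S\to S^*(\omega)$ from the susceptible-only dynamics of \S\ref{S vital}.

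First, using continuity of $\bar f$ at $(\hat\Theta^u/\gamma,0)$ together with the strict inequality $\gamma\bar f(\hat\Theta^u/\gamma,0)<\delta_1$, I would fix $\delta>0$ small enough that
\[
\gamma\bar f\!\left(\tfrac{\hat\Theta^u}{\gamma}+\delta,\,0\right)<\delta_1.
\]
By Proposition~\ref{PropB2}, $\mathcal W_\delta\times\Omega$ is an absorbing set in $\mathcal T(\R_+^2)$, so for every $K\in\mathcal T(\R_+^2)$ and every $\omega\in\Omega$ there exists $T_K(\omega)$ such that for all $t\geq T_K(\omega)$ and every $w_0=(S_0,P_0)\in K(\theta_{-t}\omega)$ the pullback trajectory satisfies $\gamma S+P\leq\hat\Theta^u+\delta$, and in particular $S\leq\hat\Theta^u/\gamma+\delta$ and $P\geq 0$ (by the analogue of Lemma~\ref{l:inv reg} for system~\eqref{Eq.SP random}).

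Next, exploiting the monotonicity in \ref{cond:fg} (namely $S\mapsto f(S,0,P)$ nondecreasing and $P\mapsto f(S,0,P)$ nonincreasing) applied to $\bar f(S,P)=f(S,0,P)$, I obtain for $t\geq T_K(\omega)$
\[
\gamma\bar f(S,P)\leq\gamma\bar f\!\left(\tfrac{\hat\Theta^u}{\gamma}+\delta,\,0\right)<\delta_1.
\]
Setting $\kappa:=\delta_1-\gamma\bar f(\hat\Theta^u/\gamma+\delta,0)>0$, this gives $P'\leq-\kappa P$ on $[T_K(\omega),\infty)$, whence $P(t;\theta_{-t}\omega,P_0)\to 0$ exponentially as $t\to\infty$, uniformly over $P_0\in K(\theta_{-t}\omega)$ (because $P$ stays bounded by $\hat\Theta^u+\delta$ at time $T_K(\omega)$ for such initial data).

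Finally, to obtain $S\to S^*(\omega)$, I would argue as in the analogous step of the extinction proof for the full system: the $S$-equation can be written as
\[
S'=\Lambda(\theta_t\omega)-\mu S-\bar f(S,P)P,
\]
and the perturbation term $\bar f(S,P)P$ is bounded by $\bar f(\hat\Theta^u/\gamma+\delta,0)\,P(t)\to 0$. Combined with the explicit solution formula used in~\eqref{eq.S}, a Duhamel-type estimate then gives $S(t;\theta_{-t}\omega,S_0)\to S^*(\omega)$ in pullback. Hence every pullback limit point of $\phi_{SP}(t,\theta_{-t}\omega,K(\theta_{-t}\omega))$ is $(S^*(\omega),0)$, and by uniqueness (Remark~\ref{unique}) the attractor has singleton fibers $\mathcal A_{SP}(\omega)=\{(S^*(\omega),0)\}$. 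The step that requires the most care is this last passage from $P\to 0$ to $S\to S^*(\omega)$ under pullback, since one must control the inhomogeneous term $\bar f(S,P)P$ in the variation-of-constants formula uniformly in $t$; the uniform bound on $\bar f$ coming from $\mathcal W_\delta$ makes this routine.
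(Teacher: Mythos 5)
Your argument is correct and follows essentially the same route as the paper: fix $\delta$ so that $\gamma\bar f(\hat\Theta^u/\gamma+\delta,0)<\delta_1$, use the absorbing set $\mathcal W_\delta$ from Proposition~\ref{PropB2} and the monotonicity of $\bar f$ to get $P'\leq(\gamma\bar f(\hat\Theta^u/\gamma+\delta,0)-\delta_1)P$, conclude $P\to 0$, and then recover $S\to S^*(\omega)$ from the susceptible vital dynamics of \S\ref{S vital}. If anything, your final step is more careful than the paper's, which simply invokes the $P=0$ dynamics rather than spelling out the Duhamel estimate controlling the vanishing term $\bar f(S,P)P$.
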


\begin{proof}

We can choose $ \delta>0 $ small enough, such that
\begin{equation}\label{eq:cond P to 0}\gamma \bar f\left(\frac{\hat\Theta^u}\gamma+\delta,0\right)<\delta_1.
\end{equation}
Since $\mathcal W_\delta\times\W $ is an absorbing set in $\mathcal{T}(\mathbb{R}_+^2)$, for any $K \in \mathcal{T}(\mathbb{R}_+^2),$ there is $T_K(\omega)$ such that, for $ t>0 $ sufficiently large we have
\[
S=S(t;\theta_{-t}\omega,S_0) \leq \frac{\hat\Theta^u}{\gamma}+\delta.\]
From the monotonicity of $f$ (and thus of $\bar f$) we  have
for all $t \geq T_K(\omega)$
\begin{equation}\label{P1}
\begin{split}
P'=P'(t,\theta_{-t}\omega,P_0)&=(\gamma\bar f(S,P)-\delta_1 -\delta_2 P)P\\
&\leq \left(\gamma\bar f\left({\hat\Theta^u}/{\gamma}+\delta,0\right)-\delta_1\right)P.
\end{split}
\end{equation}
From~\eqref{eq:cond P to 0} we conclude
that $P(t;\theta_{-t}\omega,P_0)$ decreases to zero as $t$ goes to infinity. Moreover, if $P=0$, from the vital dynamics of susceptible preys~\eqref{eq.S} we have for all $\w\in\W$
\[\lim_{t\to+\infty} S(t;\theta_{-t}\w,S_0)=S^*(\w).\]
\end{proof}

\bibliographystyle{elsart-num-sort}


\end{document}